
\documentclass[11pt, twoside]{article}
\usepackage{amsfonts,amssymb,amsmath,amsthm}
\usepackage{graphicx,url}
\usepackage[all]{xy}

\usepackage{color,psfrag,xmpmulti,amscd, import}
\newtheorem*{mainthm}{Main Theorem (Simplified form)}

\newtheorem*{propA}{Proposition A}
\newtheorem*{propB}{Proposition B}
\newtheorem*{propC}{Proposition C}
\newtheorem*{mainthm2}{Main Theorem (General form)}

\newtheorem*{prop1}{Proposition 1}

\newtheorem*{theorem}{Theorem}
\newtheorem*{corollary}{Corollary}

\theoremstyle{remark}
\newtheorem*{remark}{Remark}
\newtheorem*{claim}{Claim}

\theoremstyle{definition}
\newtheorem*{definition}{Definition}

\setlength{\paperwidth}{210mm} 
\setlength{\paperheight}{297mm}
\setlength{\textheight}{215mm} 
\setlength{\textwidth}{155mm}
\setlength\oddsidemargin {\paperwidth}
\addtolength\oddsidemargin{-\textwidth} \divide\oddsidemargin by 2
\addtolength\oddsidemargin{-1in}
\addtolength\oddsidemargin{-\hoffset} \setlength\evensidemargin
{\oddsidemargin}
\setlength{\topmargin}{-0.5cm}
\setlength{\parskip}{0.1cm}
\baselineskip=24pt 


\newtheorem{thm}{Theorem}[section]

\newtheorem{cor}[thm]{Corollary}
\newtheorem{lem}[thm]{Lemma}

\newtheorem{prop}[thm]{Proposition}

\theoremstyle{definition}

\theoremstyle{remark}

\numberwithin{equation}{section}

\font\nt=cmr7

\def\note#1
{\marginpar
{\nt $\leftarrow$
\par
\hfuzz=20pt \hbadness=9000 \hyphenpenalty=-100 \exhyphenpenalty=-100
\pretolerance=-1 \tolerance=9999 \doublehyphendemerits=-100000
\finalhyphendemerits=-100000 \baselineskip=6pt
#1}\hfuzz=1pt}

\def\be{\begin{equation}}
\def\ee{\end{equation}}

\renewcommand{\epsilon}{\varepsilon}

\newcommand{\ra}{\rightarrow}

\newcommand{\De}{{\Delta}}

\newcommand{\BB}{{\cal B}}
\newcommand{\CC}{{\cal C}}

\newcommand{\FF}{{\cal F}}

\newcommand{\JJ}{{\cal J}}
\newcommand{\HH}{{\cal H}}

\newcommand{\RR}{{\cal R}}

\newcommand{\C}{{\mathbb C}}

\newcommand{\D}{{\mathbb D}}

\newcommand{\N}{{\mathbb N}}
\newcommand{\Q}{{\mathbb Q}}

\newcommand{\R}{{\mathbb R}}
\renewcommand{\S}{{\mathbb S}}

\newcommand{\Z}{{\mathbb Z}}

\def\B0{{\mathbf{0}}}


%
%



\newcommand{\ov}{\overline}

\renewcommand{\ra}{\rightarrow}



\catcode`\@=12

\def\Empty{}
\newcommand\oplabel[1]{
  \def\OpArg{#1} \ifx \OpArg\Empty {} \else
  	\label{#1}
  \fi}
		
%

%

%
\renewcommand{\hat}{\widehat}

\newcommand{\s}{{\bf{s}}}

\newcommand{\BBt}{{\widetilde{\BB}}}


\renewcommand{\De}{\Delta}

\newcommand{\dde}{\partial\Delta}
\newcommand{\Deh}{\hat{\Delta}}
\newcommand{\deh}{\hat{\Delta}}
\newcommand{\Det}{\widetilde{\Delta}}
\renewcommand{\det}{\widetilde{\Delta}}

\renewcommand{\hat}{\widehat}

\newcommand{\Xhat}{\widehat{X}}
\newcommand{\Yhat}{\widehat{Y}}
\newcommand{\Dstar}{{\D^*}}

\newcommand{\fum}{f_U^{-1}}

 \title{Singular values and bounded Siegel disks}
 
\vspace{5cm}

 \author{\small Anna Miriam Benini \thanks{Partially supported by the ERC grant HEVO - Holomorphic Evolution Equations n. 277691.}\\  
\small CRM Ennio de Giorgi\\
\small Piazza dei Cavalieri 3 \\   
\small 56100 Pisa, Italy\\ 
\small {\tt ambenini$@$gmail.com} 
\and 
\small N\'uria Fagella\thanks{Partially supported by the Catalan grants 2009SGR-792/2014 SGR 555, by the Spanish grant MTM2011-26995- C02-02 and by Polish NCN grant decision DEC-2012/06/M/ST1/00168. }\\   
\small Dept.~de Mat.~Aplicada i An\`alisi\\ 
\small Univ.~de Barcelona, Gran Via 585 \\ 
\small 08007 Barcelona, Spain\\  
\small {\tt fagella$@$maia.ub.es} 
}

%
%
%
  
%
\begin{document}

\maketitle  
\begin{abstract}Let $f$ be an entire transcendental function of finite order and $\Delta$ be a forward invariant bounded Siegel disk for $f$ with rotation number in Herman's class $\HH$. {We show that if $f$ has two singular values with bounded orbit, then  the boundary of $\Delta$ contains a critical point. We also give a criterion under which the critical point in question is recurrent. We actually prove a more general theorem with less restrictive hypotheses, from which these results follow.}

\end{abstract}

\section{Introduction}

We consider the dynamical system generated by the iterates of an entire transcendental function $f:\C \to \C$, that is a function which is holomorphic on the complex plane $\C$ and has an essential singularity at infinity (in general, we will omit the word 'transcendental'). In this setup, there is a dynamically natural partition of the phase space into two completely invariant subsets: the {\em Fatou set} $\FF(f)$, formed by those  $z\in\C$ for which the family of iterates $\{f^n\}_{n\in\N}$ is  normal in the sense of Montel in some neighborhood of $z$; and the {\em Julia set} $\JJ(f)$, its complement. Orbits in the Julia set exhibit chaotic behavior -- in fact, $\JJ(f)$ is the closure of the repelling periodic points of $f$.

The Fatou set is open, possibly with infinitely many components, called {\em Fatou components}. The periodic ones are completely classified into basins of attraction of attracting or parabolic cycles, Siegel disks (topological disks on which a certain iterate of $f$ is conjugate to a rigid irrational rotation of angle $\theta$, called the {\em rotation number}) or Baker domains (regions on which iterates converge uniformly to infinity).  Non-periodic Fatou components are  called \emph{preperiodic} if they are eventually mapped to a periodic component, and  \emph{wandering} otherwise. Baker and wandering domains are types of Fatou components which appear only in the transcendental setting. 

In holomorphic dynamics, the {\em singular values} of $f$ play a crucial role.  A singular value of $f$ is a point around which not all branches of $f^{-1}$ are well defined and univalent (see Section \ref{CoveringsSection}). {With this definition, the set
 $$
S(f):=\{s\in\C; \; s \text{ is a singular value for $f$}\}.
$$
 is closed. Observe  that  $f:\C\setminus f^{-1}(S(f))\ra \C\setminus S(f)$ is a covering of infinite degree.  A singular value $s\in\C$ is called  a \emph{critical value} if $s=f(c)$, where $c$ is a point with vanishing derivative or {\em critical point}. It is called     an \emph{asymptotic value} if there exists a curve tending to infinity whose image under $f$ is a bounded curve converging to $s$. Morally, asymptotic values concentrate infinitely many preimages at infinity.  The set of singular values consists of critical values, asymptotic values and any of their accumulation points.  {Singular values may also have {\em regular} preimages, i.e. preimages at which $f$ is locally univalent.} A singular value $s$ is called \emph{isolated} if there is a neighborhood $V$ of $s$ such that $V\cap S(f)= \{s\}$. 
 
Most types of Fatou components are associated with a \emph{singular orbit} (the orbit of a singular value). For example, all basins of attraction of attracting or parabolic cycles  must contain at least one  singular  orbit  \cite[Theorems 8.6 and 10.15]{Mi} which therefore accumulates on the periodic cycle. 

Siegel disks cannot contain critical points although they may contain asymptotic or critical values if these have regular preimages which are contained in the  cycle of Siegel disks.  On the other hand a theorem of Fatou \cite[Corollary 14.4]{Mi} asserts that the boundary of a Siegel disk must be in the accumulation set of the union of singular orbits.  The relation of $S(f)$ with wandering or Baker domains is not as well understood but, if  $f$ is an  entire function with finitely many singular values, neither wandering domains nor Baker domains exist (\cite{EL,GK}).  Likewise, if $f$ belongs to the Eremenko-Lyubich class
\[
\BB:=\{ \text{$f:\C\to \C$ entire transcendental;  $S(f)$ is bounded} \}, 
\]
 then $f$ has no Baker domains, nor any wandering domains for which the iterates converge uniformly to infinity \cite{EL}. Wandering domains which oscillate are possible \cite{bishop}, and wandering domains with  bounded orbit have not yet been discarded, nor proven to exist. 


From now on, let $\De$ be a {\em bounded} invariant Siegel disk with rotation number $\theta$.  The arithmetics of $\theta$ play a relevant role. The set of {\em Herman numbers} $\HH$ is defined as the set of $\theta \in \R$ such that every orientation preserving analytic diffeomorphism of the circle is analytically linearizable, that is, analytically conjugate to the rigid rotation $\RR_\theta$. This set was proven to be nonempty by Herman \cite{He1} who showed that it contains all diophantine numbers, and was later described arithmetically by Yoccoz \cite{Yo}. 

Although the topology of $\De $ is trivial and the dynamics is well understood, neither assertion is true for the boundary of the Siegel disk. Certain properties depend on the rotation number $\theta$, and in some instances, on the dynamics of $f$ outside of $\Delta$. For example, it is not known (even in rational or polynomial dynamics) whether $\dde$ is always a Jordan curve, although it is believed to be so. In transcendental dynamics, unbounded Siegel disks may have non-locally connected boundaries due to the presence of an asymptotic value in the boundary (using its preimage at infinity), as it occurs for example for some members of  the  exponential family $\lambda e^z$. In the case of bounded disks however, no example has been found  yet for which the boundary is not a Jordan curve. 

The existence of an asymptotic value (acting as such) or a critical  point in the boundary of a Siegel disk is an obvious obstruction for the linearization domain to be extended further and hence a natural reason for the maximality of $\Delta$. A theorem of Ghys \cite[Theorem 3]{ghys} states that if  $f$ has rotation number $\theta \in \HH$ on an invariant  Jordan curve $\gamma$ and $f$ is univalent in a neighborhood of $\gamma$, then  $f$ is analytically conjugate to the rigid rotation $R_\theta$ in an annular neighborhood of $\gamma$.  The main corollary is that if $\theta \in \HH$ and $\partial \Delta$ is a Jordan curve then, independently of the nature of $f$ outside the disk, $\partial \Delta$ contains a critical point. The proof of this theorem is at the base of many later results related to the problem of existence of critical points on the boundary of Siegel disks, which is the main problem addressed in this paper.  Finally observe that  that the arithmetic assumption on $\theta$ is non removable, as shown by the examples of Siegel disks with Jordan boundaries without singular values constructed by  Ghys \cite[Theorem 5]{ghys}, Herman \cite{her3}  and  Rempe \cite[Theorem 6.1.4]{remthesis} for exponential maps. 

The question remains of whether any Siegel disk $\Delta$ with rotation number $\theta \in \HH$ contains a critical point (or an asymptotic value) on its boundary, {\em without any assumption on the topology of $\partial \Delta$}.
 
 This question was first addressed by Herman \cite{He2} who, based on Ghys' ideas, developed a general theorem about extensions of holomorphic maps which leave invariant a rotation annulus with rotation number in $\HH$. His result and some interesting corollaries are summarized in the following statement. 

\begin{theorem}[Herman]\ 
\begin{itemize}
\item[\rm (a)]
Let $f$ be  a holomorphic map on a domain $U\subset\C$, and $\Delta$ be a bounded invariant Siegel disk of $f$ with rotation number $\theta\in\HH$ such that $\ov{\Delta}\subset U$. 
Then there is a critical point in $\dde$ provided that moreover $f|_{\dde}$  is injective.
\item[\rm (b)] If $f(z)=z^d + c$ with $d\geq 2$ and $c\in\C$ has a Siegel disk $\Delta$ with rotation number $\theta \in\HH$, then $\partial \Delta$ contains a critical point.
\item[\rm (c)] If $f(z)=\lambda e^z$ with $\lambda \in\C\setminus\{0\}$ has a Siegel disk $\Delta$ with rotation number $\theta \in\HH$, then $\Delta$ is unbounded.
\end{itemize}
\end{theorem}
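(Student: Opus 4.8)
The plan is to reduce parts (b) and (c) to part (a) by showing that, in these specific families, the map restricted to $\partial\Delta$ is automatically injective, so that Herman's abstract extension result applies. This, of course, is not how Herman actually argues—part (a) is itself the deep statement, proven via his theory of extensions of circle-like maps with rotation number in $\HH$—but since part (a) is granted here, the work lies in verifying its hypotheses in the two concrete cases and, separately, in proving (a) itself using the Ghys-type linearization machinery.

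For part (a), the key is Herman's extension theorem. One considers $f$ acting near $\partial\Delta$. Inside $\Delta$ the map is conjugate to $R_\theta$; the content is to show that this conjugacy extends to an \emph{annular} neighborhood straddling $\partial\Delta$, from which it follows that $\partial\Delta$ would lie in the Fatou set—a contradiction, unless the extension is obstructed. The obstruction, under the injectivity hypothesis on $f|_{\partial\Delta}$, must come from a critical point of $f$ sitting on $\partial\Delta$. The first step is to pass to a linearizing coordinate $\phi:\Delta\to\D$ conjugating $f$ to $R_\theta$, and to study the boundary behavior of $\phi$ and of $f$ near $\partial\Delta$. The second step invokes the theory (Ghys, Yoccoz, Herman) that for $\theta\in\HH$, a sufficiently regular circle map is analytically linearizable; the hypothesis $f|_{\partial\Delta}$ injective is what allows one to view the boundary dynamics as (semi)conjugate to an analytic circle diffeomorphism away from critical points. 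The third step is a contradiction argument: if $\partial\Delta$ contained no critical point, then $f$ would be locally injective in a full neighborhood of $\partial\Delta$, one could run the Ghys argument to linearize across $\partial\Delta$, and thus $\partial\Delta\subset\FF(f)$, contradicting that Siegel disk boundaries lie in $\JJ(f)$.

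For part (b), with $f(z)=z^d+c$: the Siegel disk $\Delta$ is bounded (polynomials have no Fatou components at infinity, as $\infty$ is a superattracting fixed point), so $\overline{\Delta}\subset\C=U$. It remains to check $f|_{\partial\Delta}$ is injective. The only way injectivity can fail on $\partial\Delta$ is if $\partial\Delta$ contains a critical point (the unique critical point being $0$, with $f(z_1)=f(z_2)$ for distinct $z_1,z_2$ near $\partial\Delta$ forcing a critical point between them)—but then we are already done, since that is exactly the conclusion. So one argues: either $f|_{\partial\Delta}$ is injective, in which case (a) gives a critical point on $\partial\Delta$; or $f|_{\partial\Delta}$ is not injective, in which case a short topological/valence argument (two preimages on the boundary of the forward-invariant topological disk $\Delta$ force the critical point $0$ onto $\partial\Delta$) directly yields the conclusion.

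For part (c), with $f(z)=\lambda e^z$: here the strategy is by contradiction—suppose $\Delta$ is bounded. Then $\overline{\Delta}\subset\C$, $f$ is entire, and crucially $f$ has \emph{no critical points at all}, hence none on $\partial\Delta$; moreover $f=\lambda e^z$ is locally injective everywhere. Thus $f|_{\partial\Delta}$ is injective (a failure of injectivity on the boundary of the forward-invariant disk would again force a critical point, but there are none), so part (a) applies and produces a critical point on $\partial\Delta$—a contradiction. Hence $\Delta$ must be unbounded. The main obstacle throughout is part (a): making precise the sense in which the boundary dynamics of $f$ on $\partial\Delta$, under the injectivity hypothesis, is an analytic circle diffeomorphism to which the Ghys--Yoccoz linearization applies, and carefully handling the regularity of the boundary map (it need not a priori be smooth, so one works with the extension theorem rather than naive conjugation). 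The reductions (b) and (c) are then comparatively routine valence/topology arguments plus the standard facts that polynomials have bounded Siegel disks and that $\lambda e^z$ is critical-point-free.
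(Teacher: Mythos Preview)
Your outline for (a) has the right endgame (linearize across $\partial\Delta$ and contradict maximality), but the route you sketch---via the \emph{inner} linearizing map $\phi:\Delta\to\D$ and its boundary behavior---is not the one used in the paper (following Herman), and it runs into exactly the obstacle you flag but do not resolve: $\partial\Delta$ need not be a Jordan curve, so $\phi$ has no controlled boundary extension and there is no obvious ``circle'' on which to read off an analytic diffeomorphism. Ghys' theorem already handles the Jordan case; Herman's contribution in (a) is precisely to bypass this. The paper does so by uniformizing the \emph{complement} $\C\setminus\widehat\Delta$ to $\C\setminus\overline\D$, conjugating $f$ to a map $g$ on a one-sided annulus, and then using Schwarz reflection across $\S^1$ (legitimate because $\widehat\Delta$ is locally backward invariant) to obtain an honest analytic circle map. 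Degree one comes from univalence of $f$ near $\widehat\Delta$; then $\theta\in\HH$ gives analytic linearization of $g$, which pulls back to a linearization of $f$ on an annulus straddling $\partial\Delta$, contradicting maximality. The outer uniformization plus reflection is the missing technical device in your sketch.

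Your reductions for (b) and (c) also have a gap. You assert that failure of injectivity of $f|_{\partial\Delta}$ ``forces a critical point between'' the two preimages. For $\lambda e^z$ this is simply false as a local principle: $f(a)=f(b)$ means $b-a\in 2\pi i\Z$, with no critical point anywhere. Even for $z^d+c$, two points $a,b\in\partial\Delta$ with $b=\omega a$ do not, by any short valence argument, place $0$ on $\partial\Delta$ when the boundary may be topologically wild (the ``two prongs map to the same prong'' picture can fail if $\partial\Delta$ is not locally connected). The paper's route is different and cleaner: one first shows, via covering theory (Lemma~\ref{Coverings}), that when no \emph{singular value} lies in $\widehat\Delta$, the map $f$ is univalent on a full simply connected \emph{neighborhood} of $\widehat\Delta$---which is strictly stronger than $f|_{\partial\Delta}$ injective and is what feeds into the outer-uniformization argument above. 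For (c) this amounts to checking $0\notin\widehat\Delta$; for (b) one checks the critical value is outside $\widehat\Delta$ (or else argues separately). Your dichotomy ``injective on $\partial\Delta$ or critical point already there'' is not established, and the paper does not use it.
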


Hence part (b) gives an answer for unicritical polynomials. Very recently Ch\'eritat and Roesch \cite[Theorem 1]{CR} went one step further and proved that Siegel disks of polynomials with two critical values and rotation number in $\HH$ have a critical point on the boundary. They conjecture that this will be the case for all rational maps.

\begin{theorem}[Ch\'eritat-Roesch]\label{CR} Let $P$ be a polynomial with two finite critical values, and $\Delta$ be an invariant  Siegel disk with rotation number in $\HH$. Then   $\partial\Delta$ contains a critical point. 
\end{theorem}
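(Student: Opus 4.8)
The plan is to reduce the statement about a polynomial $P$ with two critical values to Herman's extension theorem (part (a) above), by showing that if $\partial\Delta$ contained no critical point then one could enlarge $\Delta$ to an invariant domain on which $P$ is still injective on the boundary, contradicting maximality. The natural framework is that of \emph{Siegel disks and their ``holes''}: consider the cycle of preimages of $\Delta$ under $P$; since $P$ has degree $d$, the Siegel disk $\Delta$ has $d$ preimages (counted without multiplicity, generically), one of which is $\Delta$ itself. The key dichotomy is whether or not the two critical points of $P$ lie on $\partial\Delta$ or inside a component of $P^{-1}(\Delta)\setminus\Delta$. If neither critical point is in $\overline{\Delta}$, then a neighborhood of $\overline{\Delta}$ maps properly onto its image with the boundary mapped injectively, which is exactly the hypothesis of Herman's theorem (a) (or Ghys' theorem), already yielding a critical point on $\partial\Delta$ — contradiction. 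So the substance of the proof is to rule out the case where a critical point is attached to $\partial\Delta$ from ``outside'', i.e.\ where $\partial\Delta$ passes through a critical point but one wants instead to show there is one honestly on the boundary with the right local behavior, and more delicately the case where the grand orbit of $\Delta$ approaches $\partial\Delta$ so that $P|_{\partial\Delta}$ fails to be injective after enlargement.

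The main technical engine, following Ch\'eritat--Roesch, should be a \emph{surgery / quasiconformal rigidity} argument combined with the structure of polynomial-like restrictions. Concretely, I would: (1) show that if $\partial\Delta$ has no critical point, the first-return-type map obtained by taking the union of $\Delta$ with all the components of $P^{-1}(\Delta)$ that touch $\partial\Delta$ is, after suitable pruning, a holomorphic map defined on a neighborhood of $\overline\Delta$ for which $\partial\Delta$ is an invariant curve and the map is univalent near that curve; (2) invoke Ghys/Herman to conclude $P$ is linearizable on a genuine annular neighborhood of $\partial\Delta$, contradicting that $\Delta$ is the maximal linearization domain; (3) handle the possibility that infinitely many components of the grand orbit of $\Delta$ accumulate on $\partial\Delta$ — this is where having only \emph{two} critical values is essential, since it bounds the combinatorial complexity (the ``spine'' or Hubbard-tree-type structure) and prevents the preimages from clustering on the boundary in a way that would destroy injectivity. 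The two-critical-value hypothesis is what lets one use a Riemann--Hurwitz count to show that only finitely many preimage components of $\Delta$ can abut $\partial\Delta$, and that each critical point can be ``pushed'' to the boundary of exactly one disk in the cycle.

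Alternatively — and this is probably closer to what Ch\'eritat and Roesch actually do — one argues by \emph{contradiction using a model map and quasiconformal surgery}: assuming no critical point on $\partial\Delta$, build a quasiregular map $g$ agreeing with $P$ away from $\overline\Delta$ and equal to a rotation on a slightly larger disk, with the quasiconformal dilatation supported on a thin annulus; since $\theta\in\HH$ and by the Ghys-type linearization the boundary dynamics is already smooth, one can integrate the Beltrami coefficient and straighten $g$ to a polynomial $\tilde P$ of the same degree with two critical values and a \emph{strictly larger} Siegel disk. A normalization / rigidity argument (the space of degree-$d$ polynomials with two critical values is finite-dimensional, and the critical-value data plus the rotation number essentially pins down $P$) then forces $\tilde P = P$ up to affine conjugacy, contradicting that the Siegel disk grew. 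The delicate point throughout is controlling what happens along $\partial\Delta$ itself when it is not a Jordan curve: a priori $\partial\Delta$ could be a complicated continuum, so one cannot naively ``cut along it.'' This is handled by working with the linearizing coordinate on $\Delta$ and the prime-end / accesses structure rather than with $\partial\Delta$ as a point set.

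The hard part, as in all results of this type since Ghys, is Step (3) / the accumulation issue: ruling out that infinitely many components of the grand orbit of $\Delta$ pile up against $\partial\Delta$ and thereby obstruct the enlargement or the injectivity of the boundary map. For a single critical point (Herman's (b)) this is automatic, and for two critical values it requires a genuine argument using the boundedness of the two singular orbits together with a Riemann--Hurwitz / covering-degree bookkeeping over the annular neighborhood produced by Ghys' theorem; I expect the bulk of the paper's technical work, and the place where the finite-order/class-$\BB$ hypotheses from the abstract will enter in the transcendental generalization, to be exactly here.
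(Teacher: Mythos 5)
Your proposal diverges from the actual Ch\'eritat--Roesch strategy (which this paper imports and generalizes) in ways that leave real gaps, so let me be concrete.

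The proof in \cite{CR}, as reproduced and extended in this paper, is organized around a \emph{trichotomy} on the position of the critical values relative to the \emph{filled} closure $\hat\Delta$ (not $\overline\Delta$): either both are outside $\hat\Delta$ (Case A), both inside $\hat\Delta$ (Case B), or one in and one out (Case C). Your dichotomy --- critical points on $\partial\Delta$ versus not --- does not line up with this, and in particular you never discuss what happens when a critical value lies in a \emph{hidden component} of $\hat\Delta$ (a bounded component of $\C\setminus\overline\Delta$). That situation is exactly where $P$ fails to be univalent in any neighborhood of $\hat\Delta$, so your plan of ``enlarge $\Delta$ and apply Herman (a)'' cannot even get started, and this is precisely the hardest part (Case C). You also never address Case B (both critical values in $\hat\Delta$); in \cite{CR} that is disposed of by Ma\~n\'e's theorem (the Bol.\ Soc.\ Brasil version about accumulation of recurrent critical points on $\partial\Delta$), and in this paper by the Fatou-type argument of Lemma~\ref{Local Mane}, but your proposal has no substitute.

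Your ``alternative'' route via quasiconformal surgery, straightening to a polynomial $\tilde P$, and a rigidity argument forcing $\tilde P = P$ is not what the proof does and is not a sound replacement. Nothing in \cite{CR} or this paper builds a Beltrami form on a thin annulus or appeals to finite-dimensionality of the family of degree-$d$ polynomials with two critical values; in degree $>2$ it is not at all clear that the critical-value data plus rotation number pins the map down up to affine conjugacy, so this step is genuinely unproved. What actually carries Case C is a different mechanism: (i) the \emph{Separation Theorem} (for polynomials, landing external rays \`a la Goldberg--Milnor) shows that hidden components of $\hat\Delta$ are preperiodic to $\Delta$; (ii) cyclic-symmetry and ``hat'' arguments show the relevant preimage of $\hat\Delta$ equals $\hat\Delta$; (iii) uniformizing $\C\setminus\hat\Delta$ gives an analytic circle map of degree $d$ with no critical points on $\S^1$, whose periodic points are shown to be repelling because the second critical value cannot simultaneously be attracted to a non-repelling cycle and accumulate on $\partial\Delta$; (iv) Ma\~n\'e's \emph{hyperbolicity of circle maps} theorem (\thmref{Hyperbolicity of circle maps}) then yields a polynomial-like restriction around $\hat\Delta$, and Fatou's theorem forces the unique critical point of that restriction onto $\partial\Delta$, contradicting its being hidden. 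Your ``Riemann--Hurwitz bookkeeping'' and ``Hubbard-tree spine'' ideas are pointed at the right worry (controlling preimage accumulation on $\partial\Delta$), but the paper's solution is the Separation Theorem plus the circle-map hyperbolicity argument, not a surgery or a counting argument, and without those two ingredients the proposal does not close.
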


Part (c) of Herman's theorem deals with the exponential family, which has one asymptotic value and no critical points.  Rempe \cite{rem04}  showed additionally that such unbounded Siegel disks must contain the asymptotic value on the boundary. He later considered a more general setting in \cite{rem08} where he showed that if an entire transcendental map has a Siegel disk $\Delta$ with rotation number in $\HH$, then  the boundary of $\Delta$ contains a {\em singular value}, provided that $S(f)\subset J(f)$. 

Inspired by Ch\'eritat-Roesch Theorem, our goal  is to address the problem for entire transcendental  functions  having a {\em bounded} invariant Siegel disk $\Delta$. Intuitively, if the Siegel Disk is bounded and the rotation number is sufficiently well behaved, the Siegel disk interacts with the critical values as in the rational case, and not with the asymptotic values as in a typical transcendental case. Nevertheless,  although in some cases we can indeed reduce locally to the polynomial setting, in general we must take into account the very different nature of $f$ provided by its essential singularity  at infinity. The infinite degree and the presence of asymptotic values give rise to more possibilities than those encountered in the polynomial setting

Our main result (in its weakest form) is the following:

\begin{mainthm} Let $f$ be an entire function of finite order with at most two singular values and with bounded singular orbits. Suppose $f$ has a (bounded) invariant Siegel disk $\Delta$ with rotation number in $\HH$.  Then there is a critical point on $\dde$.
\end{mainthm}

 We do not prove this theorem directly. Instead, we state and prove a quite more general result from which the above will follow. The Main Theorem in its general form has somewhat more technical hypothesis which we explain  in the next section.  In particular the hypothesis of bounded singular orbits is much stronger than what is needed, but easier to state. A neither optimal alternative is to require that all periodic rays land (see Section~\ref{separation}).

Along the way we also prove  a criterion to establish when the critical point in question  is recurrent (see Proposition 1).

\subsection{Statement of results}

To state the general form of our results we first introduce some terminology. In what follows, $f$ is an entire transcendental map and $\Delta$ a bounded Siegel disk of $f$. 
Let   $\ov{\De}$ denote the closure of $\Delta$ and $\hat{\De}$ be the complement  of the unique unbounded  connected component of $\C\setminus\ov{\De}$. {Observe that $\Deh$ is full by definition.} If the boundary of $\De$ is a Jordan curve,  then $\deh=\ov{\De}$. 

Because $\De$ is forward invariant, $\dde$ also is. The bounded components of $\C\setminus\ov{\De}$ are called  \emph{hidden components} and are denoted by  $H_i$. 
Hidden components (see \cite{He2})  are Fatou components, because $\partial H_i\subset\dde$ and $f(\dde)=\dde$; so by the maximum principle the family of iterates on $H_i$ is bounded hence normal.  For the same reason  the images of hidden components are contained in $\Deh$, so  $\deh$ is forward invariant.  As a consequence, it is easy to check that $\partial \Delta = \partial \Delta_\infty$ where $\Delta_\infty$ denotes the unbounded component of $\C\setminus \widehat{\Delta}$. This striking fact does not imply a priori that there are no hidden components, as shown in the lakes of Wada constructions (see \cite{Yn}).  Examples of Siegel disks with hidden components have never been found; however, their existence has not been discarded either.

In general we allow $f$ to have infinitely many singular values, although in many cases only a finite subset is allowed to interact with $\Delta$. 
\begin{definition} (Interacting singular values) 
We say that only a finite set of singular values $S_\Delta \subset S(f)$ is {\em interacting with} $\Delta$ if the following conditions are satisfied:
\begin{enumerate}
\item[\rm (a)] $\omega(s)\cap\deh \neq \emptyset$ for all $s\in S_\Delta$;
\item[\rm (b)] $\ov{\bigcup_{s\in S(f)\setminus S_\Delta}\omega(s)}\cap\deh=\emptyset$, and
\item[\rm (c)] the  singular values in $S(f) \setminus S_\Delta$  do not accumulate on $\dde$.
\end{enumerate}
\end{definition}
If only a finite set $S_\De$ of singular values is interacting with $\De$, then  the singular values in $S(f)\setminus S_\Delta$ are called {\em non-interacting}.
Escaping singular values, as well as singular values which belong to preperiodic Fatou components whose closure is disjoint from $\dde$, are non-interacting (if they are in wandering domains, they could).  Also note that if $S(f)$ is finite then it is (trivially) true that there are  only finitely many singular values interacting with $\De$. 

A property that plays a role in a certain part of the proof is the following. 
\begin{definition}[Separation Property]
We say that a holomorphic function $f$ has the {\em Separation Property} if the closures of any two periodic components of $F(f)$ may intersect in at most one point.
\end{definition}

The Separation Property is satisfied by all polynomials with a connected Julia set  \cite{GM}. Entire maps for which an analogous of external rays can be defined and such that all periodic rays land, also have this property \cite{BF} (see Section \ref{separation} for details). 
In particular, functions of finite order in class $\BB$ (or compositions thereof) possess {\em dynamic rays} in their escaping set \cite{R3S}.  It is expected that, unless a periodic ray contains  a singular value, it does land, in which case it necessarily has to land at a repelling or parabolic periodic point by the Snail Lemma. This is the case, for example, in polynomial dynamics (\cite{Mi}, Theorem 18.10), and for the exponential family \cite{rem06}.
In the general case, however, the simplest (and strongest) criterion for all periodic rays to land is to assume that the union of all singular orbits or {\em postsingular set} is bounded.

%
Our  main theorem in the most general form reads as follows.

\begin{mainthm2} \label{Main theorem} 
Let $f$ be an entire function with no wandering domains and satisfying the Separation Property. Suppose $f$ has  a bounded invariant Siegel disk $\Delta$ with rotation number $\theta \in \HH$, and such that  only two  singular values are interacting with $\Delta$.  Then there is a critical point on $\dde$.
\end{mainthm2}

\begin{remark} 
The hypothesis of no wandering domains can be replaced by the stronger requirement for $f$ to have finitely many singular values since, as we mentioned above,  such functions are  known not to have wandering domains \cite{EL,GK}.
\end{remark}


An immediate corollary is the following:
\begin{corollary}
Let $f$ be an entire function with no wandering domains and satisfying the Separation Property. 
If $f$ has no critical values then all Siegel disks with rotation number in $\HH$ are unbounded. 
\end{corollary}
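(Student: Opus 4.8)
The plan is to deduce the Corollary from the general Main Theorem by contraposition, after two routine reductions. Suppose, towards a contradiction, that $f$ has a \emph{bounded} Siegel disk $\Delta$ with rotation number $\theta\in\HH$, of period $p\ge 1$. First I would replace $f$ by $g:=f^{p}$, so that $\Delta$ becomes an invariant Siegel disk: one has $\FF(g)=\FF(f)$, the periodic Fatou components of $g$ are precisely those of $f$ (hence $g$ still has the Separation Property and no wandering domains), $\Delta$ is a bounded invariant Fatou component of $g$ on which $g$ acts as the rotation $R_\theta$ with $\theta\in\HH$, and --- the key point where the hypothesis is used --- $g$ has no critical points. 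Indeed, a critical value is by definition the image of a critical point, so ``$f$ has no critical values'' means $f'$ vanishes nowhere; by the chain rule $(f^{p})'=\prod_{k=0}^{p-1}f'\circ f^{k}$ vanishes nowhere either, so $g$ has neither critical points nor critical values, and all singular values of $g$ are asymptotic values or accumulation points of asymptotic values.

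Next I would apply the general Main Theorem to $g$ and $\Delta$. The hypotheses ``$g$ entire'', ``no wandering domains'' and ``Separation Property'' have just been recorded; what remains is to see that at most two singular values of $g$ interact with $\Delta$. By Fatou's theorem $\dde$ lies in the accumulation set of the union of the singular orbits of $g$, so at least one singular value interacts with $\Delta$; the substantive point is the upper bound of two, together with the requirement that the non-interacting singular values neither accumulate on $\dde$ nor have $\omega$-limit set meeting $\deh$. Once this is in place, the Main Theorem produces a critical point of $g$ on $\dde$, contradicting the fact that $g$ has no critical points. Therefore no bounded Siegel disk with rotation number in $\HH$ can exist, which is the statement of the Corollary.

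The two reductions --- passing to the first-return map so as to make $\Delta$ invariant, and using the equivalence ``no critical values $\Leftrightarrow$ no critical points'' to make the conclusion of the Main Theorem absurd --- are routine; it is the verification that only two singular values interact with $\Delta$ that I expect to be the main obstacle. In the present situation, however, this should follow from Fatou's theorem combined with the description of $\dde$, of $\deh$ and of the hidden components established earlier in the paper, which controls how many singular orbits can accumulate on the boundary of a bounded Siegel disk; granting that, the Corollary is exactly the contrapositive of the Main Theorem for entire functions without critical points.
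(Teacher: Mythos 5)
Your overall strategy---reduce to the invariant case via $f^{p}$, observe that ``no critical values'' forces ``no critical points,'' and conclude by contradiction with the Main Theorem---is presumably what the paper intends, since the authors give no proof and simply call this an ``immediate corollary.'' The routine reductions are carried out correctly: $\FF(f^{p})=\FF(f)$, the periodic Fatou components of $f$ and $f^{p}$ coincide (so the Separation Property and absence of wandering domains transfer), $\Delta$ becomes invariant with the same rotation number $\theta\in\HH$, and $(f^{p})'=\prod_{k=0}^{p-1}f'\circ f^{k}$ never vanishes because $f'$ never vanishes.

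The genuine gap is exactly the one you flag and then talk yourself out of: the Main Theorem requires that \emph{only two singular values interact with $\Delta$}, and nothing in the Corollary's hypotheses (no critical values, no wandering domains, Separation Property) provides such an upper bound. Fatou's theorem only gives a \emph{lower} bound (at least one singular value must accumulate on $\dde$); it says nothing about the number of singular values whose $\omega$-limit sets meet $\deh$, nor does the earlier discussion of hidden components. So the sentence ``this should follow from Fatou's theorem combined with the description of $\dde$, of $\deh$ and of the hidden components'' is not justified and does not close the gap. What is actually going on is that the Corollary, read as an immediate consequence of the Main Theorem, must implicitly inherit the hypothesis that at most two singular values interact with $\Delta$ (equivalently, that the reduction to Propositions A, B, C applies); this is consistent with the application given right after---the exponential family---which has a single singular value. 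The correct move is to state that the Corollary carries this hypothesis over (or to observe that as written the Corollary needs it), not to assert that it can be derived from the remaining data.
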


In view of the results in \cite{BF,remthesis} mentioned above, we recover Herman's Theorem (c) which says that all Siegel disks of the exponential map with rotation number in $\HH$ are unbounded. 


The proof of the Main Theorem is subdivided into three cases, each of them  requiring different hypothesis: In  case A we assume that all singular values interacting with $\De$ are outside $\Deh$, in case B we assume that they are all contained in $\Deh$, and in case C we assume that there is exactly one interacting singular value inside and one outside of $\Deh$. This is the same proof structure that was used in the polynomial case in \cite{CR}. Our contribution in this paper consists of the proof of case  B without using  Ma\~ne's theorem (see below)  and the proof of case C when there are asymptotic values interacting with the Siegel disk. To make the paper reasonably  self-contained,  we include the main ideas of the full proof.
We remark that the Separation Property and the restriction to two singular values is only necessary for case C.   

We now  list  the three independent statements here named as Propositions A, B and C.

\begin{propA}[All singular values outside]
Let $f$ be an entire map with a bounded Siegel disk $\De$ of rotation number $\theta$, and suppose  $S(f)\cap \Deh=\emptyset$. Then 
\begin{itemize}
\item[(a)]  $\theta\notin\HH$  (Herman);
\item[(b)] If furthermore $f$ satisfies the Separation Property and has no wandering domains, then $\De$ has no hidden components. 
\end{itemize}
\end{propA}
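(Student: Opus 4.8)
The plan is to treat the two parts separately, with part (a) essentially quoting Herman's theorem and part (b) being the new content. For part (a), the key observation is that $\widehat{\Delta}$ is full and forward invariant, its boundary is $\partial\Delta$, and by hypothesis $S(f)$ is disjoint from $\widehat{\Delta}$; in particular, $S(f)$ is disjoint from a neighborhood of $\partial\Delta$ in $\widehat\Delta$. First I would show that $f$ is injective on $\partial\Delta$: since there are no singular values on $\widehat\Delta$, every branch of $f^{-1}$ is defined and univalent on the simply connected domain $\widehat\Delta$, so one can pull back $f\colon \partial\Delta\to\partial\Delta$ along the rotation to see that $f$ restricted to a neighborhood of $\overline{\Delta}$ (inside $\widehat\Delta$ together with an outer collar, which exists because $S(f)$ does not accumulate on $\partial\Delta$ from outside either — this needs to be extracted from $S(f)\cap\widehat\Delta=\emptyset$ and the closedness of $S(f)$) is univalent. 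Then part (a) of Herman's theorem (the first bulleted item of the cited Herman theorem) applies verbatim: if $\theta\in\HH$ there would be a critical point on $\partial\Delta$, contradicting the absence of singular values near $\partial\Delta$. Hence $\theta\notin\HH$.

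For part (b), I would argue by contradiction: suppose $\Delta$ has a hidden component $H$, i.e.\ a bounded component of $\C\setminus\overline\Delta$. As noted in the text, $H$ is a Fatou component (the iterates are bounded on $H$ by the maximum principle since $\partial H\subset\partial\Delta$ and $f(\partial\Delta)=\partial\Delta$), and $f(H)\subset\widehat\Delta$. Since $f$ has no wandering domains, $H$ is (pre)periodic; and since $f(\widehat\Delta)\subset\widehat\Delta$ with $\partial\widehat\Delta=\partial\Delta$ forward invariant, the grand orbit of $H$ stays among the hidden components and the Siegel disk itself. The Fatou component $H$, being periodic or preperiodic and not escaping to infinity, must be (a preimage of) a Siegel disk, a Cremer-adjacent rotation domain, or a basin of attracting/parabolic type; in all these cases the eventual periodic cycle of Fatou components carrying $H$ must, by the classification and Fatou's theorem, meet $\omega(s)$ for some singular value $s$ (an attracting/parabolic basin contains a singular orbit; a Siegel disk has a singular value in its boundary accumulation set). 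I would then invoke the Separation Property: the closures of the two periodic Fatou components $\Delta$ and (an iterate of) $H$ meet in at most one point. Combined with the fact that $\partial H\subset\partial\Delta$ has more than one point, this forces $H$ to be eventually mapped \emph{into} $\Delta$ rather than to a disjoint periodic component — but $f^n(H)\subset\widehat\Delta\setminus\Delta$ for all $n$ because hidden components map to $\widehat\Delta$ and $\Delta$ is a component of its interior disjoint from the $H_i$'s. This contradiction shows no hidden component exists.

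The main obstacle I expect is making the Separation-Property argument in part (b) watertight: one must rule out that a hidden component is eventually periodic of \emph{Siegel type} with its own rotation domain whose closure shares exactly one point with $\partial\Delta$ — the Separation Property permits one common point, so a naive count does not immediately close the case. The resolution should come from observing that a hidden component $H$ satisfies $\overline{H}\subset\widehat\Delta$ and $f^n(H)\subset\widehat\Delta$ for all $n\ge 1$, so the whole forward orbit of $H$ lives in the compact set $\widehat\Delta$, hence its $\omega$-limit components are periodic Fatou components \emph{contained in} $\widehat\Delta$; such a periodic component $U$ has $\overline U\subset\widehat\Delta$ and $\partial U\subset\partial\Delta$ as well (same maximum-principle argument), so $U$ is itself a hidden component or equals $\Delta$. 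If $U=\Delta$ we contradict forward invariance of $\partial\widehat\Delta$ together with the open mapping theorem (a hidden component cannot map onto $\Delta$); if $U$ is a genuine hidden component, it is a periodic rotation domain or basin whose singular orbit — guaranteed to exist by Fatou's theorem or the basin classification — must then accumulate on $\partial U\subset\partial\Delta\subset\widehat\Delta$, contradicting $S(f)\cap\widehat\Delta=\emptyset$ and the hypothesis that singular values do not accumulate on $\partial\Delta$. I would double-check the boundary-containment claim $\partial U\subset\partial\Delta$ for an arbitrary periodic component $U$ in the forward orbit of $H$, since this is the linchpin that reduces everything to the already-proven absence of singular values near $\partial\Delta$.
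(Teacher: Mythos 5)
Your treatment of part (a) is essentially the paper's: establish injectivity of $f$ near $\partial\Delta$ via the covering lemma ($\widehat\Delta$ is full, $S(f)$ closed and disjoint from $\widehat\Delta$, so $f:U\to V$ is a homeomorphism), and then invoke Herman. The paper does not quote Herman's Theorem (a) directly but re-derives it by uniformizing $\C\setminus\widehat\Delta$, extending by Schwarz reflection to a degree-one analytic circle map, and applying Herman's linearization theorem; the content is the same. Do note that the injectivity you obtain holds on all of a neighborhood of $\widehat\Delta$ (not merely $\overline\Delta$), and you should say so, because this is the statement you actually need for part (b).

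Your part (b), however, has two genuine gaps, both concentrated in the two dichotomies at the end. First, your argument ruling out a periodic hidden component $U\subset\widehat\Delta$ is incorrect: you claim that a singular orbit accumulating on $\partial U\subset\partial\Delta$ contradicts $S(f)\cap\widehat\Delta=\emptyset$, but the hypothesis concerns the \emph{singular values} only, not their forward orbits. In fact, by Fatou's theorem, some singular orbit \emph{must} accumulate on $\partial\Delta$; that is no contradiction. What actually rules out a periodic hidden component is the Separation Property, and your earlier worry about it is unfounded: if $U\neq\Delta$ is a periodic hidden component, then $\partial U\subset\overline U\cap\overline\Delta$ is infinite (being the boundary of a bounded open set), while the Separation Property allows this intersection to be at most a single point. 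So the \lq\lq naive count\rq\rq\ does close the case. Second, your argument ruling out $f^n(H)\subset\Delta$ for some $n$ is not a proof: the phrase about \lq\lq forward invariance of $\partial\widehat\Delta$ together with the open mapping theorem\rq\rq\ does not establish the assertion that hidden components never map into $\Delta$, which is exactly what needs proving. The paper's argument here uses precisely the injectivity you set up in part (a): $f:U\to V$ is a homeomorphism with $f(\Delta)=\Delta$, hence $f^{-1}(\Delta)\cap U=\Delta$; a hidden component $H'=f^{n-1}(H)\subset\widehat\Delta\subset U$ with $f(H')\subset\Delta$ would then satisfy $H'\subset\Delta$, a contradiction. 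You should keep the Separation argument (don't abandon it) and replace the \lq\lq open mapping\rq\rq\ gesture with this injectivity argument.
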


The first statement is due to Herman as an easy consequence of his theorem stated above (part (a)).  The second statement is actually an addendum since it does not contribute to the proof of the Main Theorem. It is in fact a transcendental counterpart of  \cite[Theorem 3.1]{Ro} and is new
in the sense that until now there were no known functions satisfying the Separation Property (see Section {\ref{separation}).  What is actually shown is that any hidden component must be a wandering domain with bounded orbit. Wandering domains with bounded orbit are not known to exist, but not discarded either. It is plausible that they could be excluded under the asumption of having only finitely many singular values interacting with $\De$.

We remark that the nonexistence of hidden components does not imply that $\partial \De$ is a Jordan curve, as shown by the Siegel disk examples of Ch\'eritat in \cite{che}, whose boundary is a pseudocircle (thus separating the plane into two components but not being locally connected at any point). These examples however, do not come a priori from a globally defined map. 



The case in which all singular values are contained in $\Deh$ is treated in Proposition B.

\begin{propB}[All singular values inside]
Let $f$ be an entire map with a bounded invariant Siegel disk $\De$. Suppose that  there are only finitely many singular values interacting with $\De$, all of which are contained in  $\Deh$, and that there are no wandering domains in $\Deh$. Then there is a {\bf recurrent} critical point on $\dde$.
\end{propB}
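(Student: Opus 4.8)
## Proof Proposal for Proposition B

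The plan is to reduce the problem to a situation where Herman's theorem (part (a) of the Theorem above), applied to a suitable restriction of $f$ near $\dde$, produces the desired critical point, and then to upgrade ``critical point on the boundary'' to ``recurrent critical point on the boundary'' by a separate dynamical argument. First I would study the structure of $\widehat\Delta$. Since all interacting singular values lie in $\Deh$, the unbounded component $\Delta_\infty$ of $\C\setminus\widehat\Delta$ is a completely invariant-at-boundary object whose boundary equals $\dde$; moreover, on a neighborhood of $\dde$ approached from outside $\widehat\Delta$, there are no singular values of $f$, so all branches of $f^{-1}$ are defined and univalent there. The heart of the matter is to understand $f$ restricted near $\dde$ from \emph{inside} $\widehat\Delta$: here the hypotheses (finitely many interacting singular values, all in $\Deh$, and no wandering domains in $\Deh$) should force the interacting singular values to lie in periodic or preperiodic Fatou components inside $\Deh$, or directly on $\dde$. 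The goal of this first stage is to show that $f|_{\dde}$ is injective \emph{unless} $\dde$ already contains a critical point — because if $f|_{\dde}$ fails to be injective, the non-injectivity has to come from a critical point of $f$ on $\dde$ (an asymptotic value cannot be ``acting as such'' on the boundary of a bounded component with $S(f)\cap\text{(nbhd of }\dde)$ controlled).

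Second, assuming toward a contradiction that there is no critical point on $\dde$, I would invoke Herman's theorem: by the previous step $f|_{\dde}$ is injective, $\widehat\Delta$ is a full compact set, and one can build a holomorphic map on a neighborhood $U$ of $\overline\Delta$ (this is where one uses that near $\dde$ from outside there are no singular values, so $f$ — or rather a branch construction — extends holomorphically and injectively across $\dde$ into $\Delta_\infty$) for which $\Delta$ is a bounded invariant Siegel disk with rotation number $\theta\in\HH$ and $f|_{\dde}$ injective. Herman's theorem then forces a critical point on $\dde$, contradicting the assumption. Hence $\dde$ contains a critical point $c$. The delicate point in this stage is the extension: one must glue $f$ (defined inside near $\dde$) with the inverse-branch data (available outside near $\dde$) along $\dde$ to obtain a genuine holomorphic map on a two-sided neighborhood — this requires knowing that the two germs agree, which is automatic since they are both restrictions/branches of the entire map $f$ itself, so in fact $f$ \emph{already is} holomorphic on all of $\C$ and the only issue is injectivity on $\dde$, already handled.

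Third, the recurrence. Let $c\in\dde$ be a critical point (there may be several; pick one). I want to show some critical point on $\dde$ is recurrent, i.e.\ $c\in\omega(c)$ for an appropriate choice. The strategy: the forward orbit of $c$ stays in $\dde$ (which is forward invariant), and $\dde$ is compact, so $\omega(c)\subset\dde$ is nonempty, compact, forward invariant. On $\dde$ the dynamics is the boundary dynamics of a Siegel disk with $\theta\in\HH$ irrational, so it has no periodic points and the rotation-number structure means every orbit is infinite and the $\omega$-limit set is a perfect set. The key dynamical input is that $\omega(c)$ must again meet the (finitely many) critical points on $\dde$: since $f$ restricted to $\dde$ behaves, via the linearizing coordinate on $\Delta$ and Carathéodory-type arguments, like an irrational rotation on a circle, and critical points are the only obstructions to local injectivity on $\dde$, one shows the post-critical set $\overline{\bigcup_{n\geq1}f^n(\{\text{crit pts on }\dde\})}$ is all of $\dde$ or at least accumulates on every critical point; combined with finiteness of the critical set on $\dde$ and a pigeonhole/minimality argument on the compact invariant set $\omega(c)$, one gets that some critical point $c'$ on $\dde$ lies in $\omega(c')$.

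The main obstacle I expect is exactly this last step — promoting existence to recurrence — because it genuinely uses the arithmetic condition $\theta\in\HH$ and the fine structure of the boundary dynamics, not just soft compactness. The cleanest route is probably: use that near a critical point $c\in\dde$ the map $f$ is locally a (branched) covering, pull back the Siegel disk's linearizing structure, and argue that if \emph{no} critical point on $\dde$ were recurrent, then each critical orbit would eventually stay in a region where $f|_{\dde}$ is a local homeomorphism with bounded distortion, allowing a Ghys–Herman-style linearization of the boundary circle dynamics in an annular neighborhood of $\dde$ — which would in turn imply $\dde$ contains \emph{no} critical point, contradicting the second stage. A secondary obstacle is handling possible hidden components $H_i\subset\widehat\Delta$ in the first stage: one must make sure that the interacting singular values sitting inside $\widehat\Delta$ (in periodic/preperiodic components, no wandering domains being allowed) do not interfere with the injectivity analysis of $f$ on $\dde$; here the hypothesis ``$\omega(s)\cap\dde\neq\emptyset$ for interacting $s$'' is used to pin the relevant orbits down to the boundary.
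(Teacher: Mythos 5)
Your proposal has a structural problem that makes it unworkable as stated: Proposition~B carries \emph{no} arithmetic hypothesis on the rotation number, yet your second stage hinges on invoking Herman's theorem (part~(a)), which requires $\theta\in\HH$. The paper deliberately avoids any arithmetic input here and proves Proposition~B for arbitrary rotation number. So the plan to ``reduce to Herman'' cannot be the right mechanism, and you would need to either add the hypothesis $\theta\in\HH$ (weakening the result) or replace that step entirely.

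Independently of this, your first stage asserts a dichotomy --- either $\dde$ contains a critical point, or $f|_{\dde}$ is injective --- and waves at it by saying ``non-injectivity has to come from a critical point.'' That does not follow: a holomorphic map with no critical points on a compact set can still be non-injective on that set (local injectivity does not give global injectivity), and in the presence of asymptotic values inside $\Deh$ you also have to rule out the case $f:U\to V$ being an infinite-degree exponential-like covering, which you dismiss in one clause without an argument. In short, the load-bearing step of your stage one is not established.

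The paper's route is genuinely different and avoids both problems. It proves a local Fatou-type lemma (Lemma~\ref{Local Mane}): pick a point $z\in\dde$ and a neighborhood $V$ of $z$ that is avoided by the $\omega$-limit sets of all singular values \emph{not} on $\dde$ (possible because non-interacting singular orbits stay away, and interacting singular values in the Fatou set can only accumulate on $\dde$ along a finite parabolic orbit, by the classification of Fatou components and absence of wandering domains in $\Deh$). Then one assumes, for contradiction, that $\dde$ has no critical point and inductively constructs univalent inverse branches $\phi_n$ of $f^{-n}$ on $V$ mapping $V\cap\De$ into $\De$, using that every singular value met along the way has a distinguished regular preimage on $\dde$. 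Normality of $\{\phi_n\}$ plus non-degeneracy of the limit contradicts $z\in\JJ(f)$. Recurrence then comes from a clean finite pigeonhole argument on the set of critical points on $\dde$ whose orbits accumulate on $z$: chaining ``$c_{i+1}$'s orbit accumulates on $c_i$'' and using finiteness forces a cycle, hence a recurrent critical point. Your third-stage heuristics (minimality, boundary circle dynamics, Ghys-type linearization) again smuggle in $\theta\in\HH$, whereas the paper's pigeonhole argument uses only finiteness. If you want to salvage your sketch, the Fatou normal-families construction of the $\phi_n$ is the key missing idea; Herman's theorem should not appear at all.
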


In \cite{CR}, the analougous case for polynomials is proved using Ma\~ne's Theorem \cite{Ma3}  which ensures  the existence of a recurrent critical point whose orbit accumulates on $\partial \De$. The transcendental version of this nontrivial result is not yet proven in full generality (see \cite{RvS} for a certain class of entire maps). Instead, our proof  of Proposition B relays on a variation of Fatou's Theorem (see  Lemma~\ref{Local Mane}), that which asserts that all points in the boundary of a Siegel disk must be accumulated by the union of the singular orbits. In the process, we extract the following corollary which we believe might be useful in itself.  

\begin{prop1}[Recurrent critical points]
Let $f$ be an entire function with a bounded invariant Siegel disk $\De$. Suppose that  there are only finitely  many singular values interacting with $\De$, all of which are contained in  $\dde$. Then there is at least one recurrent critical point on $\dde$.
\end{prop1}}

The final case in which there is exactly one interacting singular value in $\Deh$ and one outside $\Deh $ completes the proof of the Main Theorem for functions with only  two singular values interacting with $\De$.

\begin{propC}[One inside one outside] Let $f$ be an entire transcendental map  with no wandering domains which has a bounded invariant Siegel disk $\De$ with rotation number $\theta\in\HH$. 
Suppose that $f$ has only two {singular values $v,v'$ interacting with $\De$}, and satisfies the Separation Property. If there is exactly one singular value $v$ in $\Deh$, then  $v\in \partial\Delta$ and  there is a critical preimage $c$ of $v$ on $\dde$.
\end{propC}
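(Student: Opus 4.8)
The plan is to reduce the situation to one where Ghys' theorem (in Herman's formulation, part (a) of Herman's Theorem) applies, by producing a rotation annulus on which $f$ is univalent, unless a critical point already sits on $\dde$. First I would use Proposition A(a): since $\theta\in\HH$, it cannot be that both interacting singular values are outside $\Deh$, and Proposition B handles the case of both inside; so the genuinely new configuration is exactly one interacting singular value $v\in\Deh$ and one, call it $v'$, outside $\Deh$. The first task is to locate $v$ on $\dde$. I would argue by contradiction: if $v\notin\dde$, then either $v\in\De$ or $v$ lies in a hidden component or in $\inter\Deh\setminus\ov\De$. Using Fatou's theorem (the variant Lemma~\ref{Local Mane} quoted in the excerpt), $\dde$ must be accumulated by the union of singular orbits; the non-interacting singular values do not accumulate on $\dde$ by definition, and $\omega(v')$ is a priori on the wrong side, so the accumulation on $\dde$ must come (in the limit) from $\orb(v)$. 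The point is to convert "$\omega(v)\cap\deh\neq\emptyset$ with $v$ in the interior" into a contradiction with forward invariance of $\dde$ and $\deh$ together with the Separation Property. This is where one shows that $v$, being the only interacting singular value inside, must actually be accessible from $\De$ and its orbit must in fact land on $\dde$; the Separation Property is used to prevent the closures of the Fatou components involved from sharing more than a point, which forces $v\in\dde$.

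Once $v\in\dde$, the second task is to find a \emph{critical} preimage of $v$ on $\dde$. Here I would distinguish whether $v$ is a critical value or an asymptotic value. If $v$ is a critical value, one still must exhibit a critical point \emph{on the boundary}: the idea is that $f|_{\dde}$ is not injective precisely when two boundary points are identified, and combining the local structure near $v$ with the conformal model of the Siegel disk (the linearizing coordinate on $\De$, extended to $\deh$) forces a critical preimage on $\dde$; otherwise $f$ is univalent in a neighborhood of $\dde$ and Herman's Theorem (a) gives a contradiction with $\theta\in\HH$. If $v$ is an asymptotic value, the new contribution of the paper, one must rule out that $v$ acts as an asymptotic value "from the boundary" — this is where boundedness of $\De$ enters crucially, since an asymptotic path to $v$ would have to approach $\dde$ from outside while staying in a bounded region, and one uses the absence of wandering domains plus the Separation Property to show the asymptotic tract cannot be arranged compatibly with the invariant curve structure; hence $v$ must have a regular or critical preimage on $\dde$, and a counting/injectivity argument as before upgrades this to a critical preimage.

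The key technical device throughout is the comparison between the internal dynamics on $\De$ (a rigid rotation with $\theta\in\HH$, hence linearizable and, by Ghys/Herman, non-extendable only because of a genuine singularity) and the external structure organized by periodic rays, which land by the Separation Property. I would set up, as in \cite{CR}, a "pinched disk" or quotient model: collapse $\deh$ appropriately and track where the orbit of $v$ goes, using that periodic rays landing at repelling periodic points on $\dde$ separate $v'$ (outside) from the interior, so that $\orb(v)$ is confined and must accumulate on $\dde$ in a controlled way.

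The main obstacle I anticipate is the asymptotic-value case in the step "$v\in\dde$": one must exclude that an interacting asymptotic value sits in the interior of $\Deh$ (e.g. in a hidden component or accessible only from outside $\ov\De$) while still having $\omega(v)$ meet $\deh$. Controlling the asymptotic tract — a curve going to infinity — against the \emph{bounded} region $\deh$ and the landing periodic rays is the delicate point, and it is precisely here that finite order (to have dynamic rays via \cite{R3S}) and the Separation Property are doing real work; the polynomial proof of \cite{CR} has nothing to say here, so this step has to be done from scratch.
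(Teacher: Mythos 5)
Your decomposition of the problem does not match how the proof actually goes, and the crucial mechanism for the new case is missing. You split the work as: first show $v\in\dde$, then produce a critical preimage. In fact, once one knows $v\in\dde$ the conclusion is immediate and needs none of the Herman/Ghys or injectivity machinery you describe: $f|_{\dde}$ is surjective so $v$ has a finite preimage on $\dde$, and Lemma~\ref{Regular preimages} (a small addendum to Proposition A) shows this preimage cannot be regular when $\theta\in\HH$; hence it is a critical point, and we are done. The hard part is ruling out $v\in H^v$ for a hidden component $H^v$ (the case $v\in\De$ is also dispatched instantly by Lemma~\ref{Regular preimages}). So the whole weight of the proof is in a case you essentially subsume into your vague Step 1.

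For that hidden-component case, your proposed device --- controlling the asymptotic tract against the bounded set $\deh$ and landing periodic rays --- is not the right tool and, as sketched, does not look like it yields a contradiction: an asymptotic tract is a curve to infinity on the \emph{preimage} side, and boundedness of $\De$ by itself says nothing about it. The argument in the paper is different: since $v$ is the only singular value in a simply connected $V\supset\Deh$, by Lemma~\ref{Coverings} the restriction $f\colon U\to V\setminus\{v\}$ is either equivalent to $z^d$ or to $\exp$, and in particular admits a transitive cyclic group of deck transformations $G=\langle\rho\rangle$. The Separation Property is used precisely through Corollary~\ref{Preperiodic hidden Components}: $H^v$ is preperiodic to $\De$, so the $f$-orbit of $v$ eventually lands in a direct preimage $\De_m=\rho^m(\De)$ of $\De$ that is itself a hidden component of $\Deh$. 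Then $\rho^m(\Deh)\subset\Deh$ by Proposition~\ref{Properties of hats}, and iterating $\rho^m$ produces infinitely many disjoint copies of $\De$ inside the bounded set $\Deh$ in the exponential case --- a contradiction. In the $z^d$ case one shows $\Det=\Deh$ by the same symmetry observation, then uniformizes $\C\setminus\Deh$, gets an analytic degree-$d$ circle map with no non-repelling cycle (the other singular value $v'$ cannot both be attracted to one and interact with $\De$), and applies Ma\~n\'e's hyperbolicity theorem to obtain a polynomial-like restriction and finish with Fatou's theorem. None of this uses dynamic rays directly; the rays enter only in establishing the Separation Property beforehand. Your proposal correctly flags the asymptotic subcase as the new content, but does not supply the deck-transformation argument that actually makes it work.
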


The Main Theorem in its simplified version follows from the Main Theorem in the general form  by using the following facts: if there are finitely many singular values there are no wandering domains \cite{EL,GK}, and under the assumption of finite order and two singular values with bounded orbits, $f$ satisfies the Separation Property (see the comments above the Main Theorem or Section \ref{separation}). So from now on we will only take care to prove the Main Theorem in its general form and, in particular, Propositions A, B and C.

\subsection*{Acknowledgments}
We are very thankful to Arnaud Ch\'eritat and Pascale Roesch for sharing with us  the ideas of their work on bicritical polynomials, as well as their manuscript which led to this present work.
We also thank  Walter Bergweiler and Lasse Rempe-Gillen for useful comments about the manuscript. The first author was partially supported by the ERC grant HEVO - Holomorphic Evolution Equations n. 277691.

\section{Preliminaries and tools}

\subsection{Mapping properties of entire functions}\label{CoveringsSection}

{Recall from the introduction that a singular value is a point for which there is no neighborhood on which all branches of the inverse function are well defined and univalent. 
In this sense, such a value represents a singularity for $f^{-1}$, although the latter is globally defined only as a multivalued function. 
A singular value which is not accumulated by other singular values is called an \emph{isolated singular value}, and points in $\C\setminus S(f)$ are called \emph{regular points}. 
   Singular values  can also have noncritical preimages: if $a$ is a preimage of $s$ such that $f$ is univalent in  a neighborhood of $a$, we say that $a$ is a \emph{regular preimage} of $s$; otherwise, we say that it is a \emph{critical preimage}. 
}

{For  the following classification of {singularities of $f^{-1}$} see  \cite{BE} and \cite{Iv}.}

 \begin{prop}[Classification of singularities]\label{Singularities}
Let $f$ be an  entire function, $z\in\C$, $D(z,r)$  be the disk of radius $r$ centered at $z$. For any $r$ let $U_r$ be a component of $f^{-1}(D(z,r))$, chosen such that $U_r\subset U_{r'}$ if $r<r'$. Then only two cases are possible:
\begin{itemize}\item[(a)] $\bigcap_r U_r=\{p\}, p\in\C$
\item[(b)] $\bigcap_r U_r=\emptyset$
\end{itemize}  
In case $(a)$,  $f(p)=z $, and either $f'(p)\neq 0$ {and $p$ is a regular preimage of $z$}, or $f'(p)= 0$ and $z$ is a critical value.
In case $(b)$,   the chosen branch $U_r$ defines a transcendental singularity over $z$, $z$ is  an asymptotic value for $f$,  and we  say that $z$ has a \emph{preimage at infinity} along that given branch. 
\end{prop}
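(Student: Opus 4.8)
The plan is to argue by a clean dichotomy — either some $U_{r_0}$ is bounded, in which case I extract conclusion (a), or every $U_r$ is unbounded, in which case I extract conclusion (b) — after first recording one structural fact about the nested family. Fix a sequence $r_n\downarrow 0$. Since $\overline{U_{r_{n+1}}}$ is connected, is mapped by $f$ into $\overline{D(z,r_{n+1})}\subseteq D(z,r_n)$, and meets the component $U_{r_n}$, it must lie in $U_{r_n}$; that is, $\overline{U_{r_{n+1}}}\subseteq U_{r_n}$. As $(U_r)$ is decreasing in $r$, the intersection $K:=\bigcap_{r>0}U_r$ coincides with $\bigcap_n U_{r_n}=\bigcap_n\overline{U_{r_n}}$, a decreasing intersection of nonempty connected closed sets; moreover every $p\in K$ satisfies $|f(p)-z|<r$ for all $r$, hence $f(p)=z$, so $K\subseteq f^{-1}(z)$, a discrete subset of $\C$. (Each $f^{-1}(D(z,r))$ is nonempty, so the nested family genuinely exists, since a nonconstant entire function omits at most one value and hence has dense image.)

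If some $U_{r_0}$ is bounded, then by the nesting hypothesis $U_r\subseteq U_{r_0}$ for $r<r_0$, so each such $\overline{U_r}$ is compact. Then $K$ is a decreasing intersection of nonempty compact connected sets, hence itself nonempty, compact and connected; being a connected subset of the discrete set $f^{-1}(z)$, it is a single point $K=\{p\}$, with $f(p)=z$. If $f'(p)\neq 0$ then $p$ is a regular preimage of $z$; if $f'(p)=0$ then $p$ is a critical point and $z=f(p)$ is a critical value. This is case (a).

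Now suppose $U_r$ is unbounded for every $r>0$. I would first check $K=\emptyset$: if $p\in K$, then near $p$ the map $f$ is a finite branched cover onto a disk $D(z,\rho)$, so for $r<\rho$ the component of $f^{-1}(D(z,r))$ through $p$ lies in a small ball about $p$ and is bounded; but that component is $U_r$, since it contains $p\in K\subseteq U_r$ — contradicting unboundedness. Next I would show the branches leave every compact set: for compact $L\subset\C$ the sets $L\cap\overline{U_{r_n}}$ are nested compacta with intersection contained in $K=\emptyset$, so $L\cap U_{r_n}=\emptyset$ for all large $n$. Finally I would build the asymptotic path: pick $x_n\in U_{r_n}$ and, since $U_{r_n}$ is open and connected hence path connected and contains both $x_n$ and $x_{n+1}\in U_{r_{n+1}}\subseteq U_{r_n}$, join them by an arc $\alpha_n\subseteq U_{r_n}$; concatenating the $\alpha_n$ produces a curve $\gamma\colon[0,1)\to\C$ whose tail beyond $\alpha_{n-1}$ lies in $U_{r_n}$. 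Then $\gamma(t)\to\infty$ (it eventually avoids every compact set) while $|f(\gamma(t))-z|<r_n\to 0$, so $f(\gamma(t))\to z$. Thus $z$ is an asymptotic value and the branch $U_r$ defines a transcendental singularity over $z$, with $\gamma\subseteq\bigcup_r U_r$ the promised preimage at infinity along that branch — this is case (b). Since the two cases are exhaustive and mutually exclusive, this completes the argument.

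The step I expect to be the main obstacle is the construction in case (b): the hypotheses only supply a decreasing family of unbounded connected open sets with empty intersection, and from this one has to produce an honest curve tending to $\infty$, not merely a sequence escaping to infinity. The device that makes this work is the ``escaping compacta'' property, which in turn rests on the inclusion $\overline{U_{r_{n+1}}}\subseteq U_{r_n}$ established at the outset together with the finite-intersection property for nested compact sets; once the $U_{r_n}$ are known to leave every compact set, path connectedness of each $U_{r_n}$ renders the concatenation routine.
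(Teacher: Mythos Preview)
The paper does not actually prove this proposition; it is stated as background with references to \cite{BE} and \cite{Iv}. Your argument is correct and is essentially the classical Iversen dichotomy: split on whether some $U_{r_0}$ is bounded, use compactness and discreteness of $f^{-1}(z)$ in the bounded case, and in the unbounded case first verify $\bigcap_r U_r=\emptyset$ via the local normal form of $f$, then build the asymptotic path by concatenating arcs inside the nested $U_{r_n}$, using the finite intersection property to see the tail leaves every compact set. The key inclusion $\overline{U_{r_{n+1}}}\subset U_{r_n}$ that drives both the identification $\bigcap U_{r_n}=\bigcap\overline{U_{r_n}}$ and the escape-from-compacta step is exactly the right structural observation. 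One small remark: your parenthetical about $f^{-1}(D(z,r))$ being nonempty does not by itself guarantee the existence of a \emph{nested} choice of components, but the proposition takes this as given, so nothing is missing from your proof.
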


{Singular values can be both critical values or  asymptotic values  (depending on the selected branch of the inverse).   A singular value can be an accumulation point of asymptotic or critical  values without being asymptotic or critical itself:  in this case, it is a  singular value in the sense that not all branches of the inverse are well defined and univalent in any  neighborhood, but all its preimages could be regular in the sense of Proposition~\ref{Singularities}. An example of such a point is the value $1$ for the function $f(z)=\frac{(z+c)\sin(z)}{z}$ for appropriate values of $c$.}

The next lemma is a  basic fact in algebraic topology, see for example \cite{Ha} for general theory about coverings. 
\begin{lem}[Coverings of $\D$ and $\D^*$]\label{Coverings}
 Let $U\subset\hat{\C}$, $\D$ be the unit disk and $\D^*=\D\setminus\{0\}$.
 \begin{itemize}
\item[(a)] If $f$ is a holomorphic  covering from $U\ra \D$, then $U$ is simply connected and  $f$ is univalent.
\item[(b)] If $f$ is a holomorphic  covering from $U\ra \D^*$, then either $U$ is biholomorphic to $\D^*$  and $f$ is equivalent to $z^d$, or $U$ is simply connected and $f$  is the universal covering, hence equivalent to the exponential map.
 \end{itemize}
\end{lem}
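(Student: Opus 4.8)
The plan is to deduce both parts from the Galois correspondence for covering spaces together with the computation of the relevant fundamental groups, upgrading the resulting \emph{topological} covering isomorphisms to biholomorphisms at the end. Throughout one uses that $U$ is connected (in the intended application $U$ is a connected component of a preimage), that $\pi_1(\D)$ is trivial while $\pi_1(\D^*)\cong\Z$ generated by a small loop around $0$, and that for a base which is locally path connected and semilocally simply connected — both $\D$ and $\D^*$ qualify — connected coverings are classified up to isomorphism by conjugacy classes of subgroups of $\pi_1$, the subgroup being the image of $\pi_1$ of the total space. The recurring upgrade step is the observation that a holomorphic covering is a local biholomorphism, so any covering isomorphism between two holomorphic coverings is automatically biholomorphic, since locally it is a composition of one of the covering maps with a holomorphic local inverse of the other.

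For part (a): the group $\pi_1(\D)=1$ has only the trivial subgroup, so up to isomorphism the identity $\id\colon\D\to\D$ is the unique connected covering of $\D$. Hence $f$ is isomorphic to $\id$, i.e.\ there is a homeomorphism $\phi\colon U\to\D$ with $f=\phi$ (after identification); in particular $U$ is homeomorphic to $\D$, hence simply connected, and $f$ is a holomorphic homeomorphism, therefore injective, i.e.\ univalent (indeed a biholomorphism onto $\D$). If one prefers to avoid quoting the classification, the alternative is to lift $\id_\D$ through $f$ to a continuous section $s\colon\D\to U$; then $s(\D)$ is open because $s$ is locally a local inverse of $f$, and it is also closed because over an evenly covered neighborhood the sheet through a point of $U\setminus s(\D)$ remains inside $U\setminus s(\D)$, so by connectedness $s(\D)=U$, and $f\circ s=\id_\D$ with $s$ surjective forces $f$ to be bijective.

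For part (b): the subgroups of $\Z\cong\pi_1(\D^*)$ are precisely $d\Z$ for $d\in\{0,1,2,\dots\}$, so there are two cases. If $d\geq 1$, the map $p_d\colon\D^*\to\D^*$, $p_d(z)=z^d$, is a holomorphic covering realizing the subgroup $d\Z$, so $f$ is isomorphic as a covering to $p_d$: there is a homeomorphism $\phi\colon U\to\D^*$ with $f=p_d\circ\phi$. Locally $\phi$ coincides with $f$ post-composed with a holomorphic branch of the $d$-th root, so $\phi$ is biholomorphic; hence $U$ is biholomorphic to $\D^*$ and $f$ is equivalent to $z\mapsto z^d$. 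If $d=0$, the image subgroup is trivial, so $f$ is the universal covering of $\D^*$ and $U$ is simply connected; now $\exp\colon\{\,\Re w<0\,\}\to\D^*$ (with $\{\,\Re w<0\,\}=\exp^{-1}(\D^*)$) is a holomorphic universal covering of $\D^*$, and $\{\,\Re w<0\,\}$ is biholomorphic to $\D$ by a M\"obius transformation; by uniqueness of the universal covering up to isomorphism and the same local-branch argument (now using a holomorphic branch of $\log$), $f$ is equivalent to $\exp$, and in particular $U$ is biholomorphic to $\D$.

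There is no serious obstacle here: the argument is essentially a packaging of the covering-space dictionary, the trivial list of subgroups of $1$ and of $\Z$, and the fact (made explicit by the exponential model above, or by uniformization) that the universal cover of the punctured disk is conformally the disk — this last point is what singles out the exponential as the correct normal form rather than $\C$. The only things that require a little care are the connectedness of $U$, which is what makes the dictionary applicable and forces the $1$-sheeted conclusion in (a), and the passage from a merely topological covering isomorphism to a biholomorphism, which as noted is immediate because these isomorphisms are locally compositions of $f$ with holomorphic branches of $z\mapsto z^{1/d}$ or of $\log$.
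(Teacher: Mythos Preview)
Your proof is correct and follows essentially the same route as the paper's sketch: both arguments reduce to the classification of connected coverings via subgroups of $\pi_1$ of the base, noting that $\pi_1(\D)$ is trivial and that the subgroups of $\pi_1(\D^*)\cong\Z$ are $d\Z$ for $d\geq 0$. Your version is more careful in explicitly upgrading the topological covering isomorphisms to biholomorphisms via local inverse branches, a step the paper leaves implicit.
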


\begin{proof}[Sketch of proof]

 As $f$ is a covering, the fundamental group of $U$ is a subgroup of the fundamental group of $f(U)$ which is either $\D$ or $\D^*$.
 \begin{itemize}
\item[(a)]  The fundamental group of $f(U)=\D$  is trivial, hence $U$ is simply connected as well and $f$ is a homeomorphism. 

\item[(b)]The fundamental group of $U$ is a subgroup of the fundamental group of $\D^*$ which is $\Z$, hence it is either trivial or isomorphic to $\Z$. In the first case $f$ is the universal covering and $f$ is equivalent to the exponential map. In the second case, $U$ is the quotient of the upper half plane by a translation, hence  $U\sim\D^*$  and $f$ is equivalent to $z^d$. 
\end{itemize}
\end{proof}

\begin{cor}[Separating branches] \label{separatingbranches} 
If $f$ is a holomorphic map and  $z\in\C$ is an isolated singular value for $f$, then   there is a neighborhood $V$ of $z$ such that each component of $f^{-1}(V)$ is simply connected, and is either  unbounded or containing at most one critical point. 
\end{cor}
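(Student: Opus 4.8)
The plan is to combine the classification of singularities (Proposition~\ref{Singularities}) with the covering structure given by Lemma~\ref{Coverings}. Since $z$ is an isolated singular value, there is an $r_0>0$ with $D(z,r_0)\cap S(f)=\{z\}$. Fix $r<r_0$ and set $V=D(z,r)$ and $V^*=V\setminus\{z\}$. Because $S(f)\cap V^*=\emptyset$, the restriction $f:f^{-1}(V^*)\to V^*$ is a covering map, so each connected component $W$ of $f^{-1}(V^*)$ maps onto $V^*$ as a covering of the punctured disk. By Lemma~\ref{Coverings}(b), each such $W$ is either biholomorphic to $\D^*$ with $f|_W$ equivalent to $z\mapsto z^d$ for some $d\ge 1$, or simply connected with $f|_W$ the universal covering, equivalent to the exponential map.

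Next I would pass from components of $f^{-1}(V^*)$ to components of $f^{-1}(V)$. Let $U$ be a component of $f^{-1}(V)$; then $U\setminus f^{-1}(z)$ is a union of components $W$ of $f^{-1}(V^*)$, and the points of $f^{-1}(z)$ lying in $U$ fill in the punctures. In the exponential case, $W$ is already simply connected and $f|_W:W\to V^*$ omits the value $z$; this branch defines a transcendental singularity over $z$ (case (b) of Proposition~\ref{Singularities}), the corresponding component $U=W$ is unbounded since $\bigcap U_r=\emptyset$ forces $U$ to escape every compact set, and it contains no critical point because $f|_W$ is a local homeomorphism. In the $z^d$ case, $W\simeq\D^*$ is mapped properly and $d$-to-one onto $V^*$; filling in the single puncture produces a component $U\simeq\D$ on which $f$ is a proper degree-$d$ branched cover of $V$ branched only over $z$, so $U$ is simply connected and, by Riemann--Hurwitz, contains exactly one critical point (of local degree $d$) when $d\ge 2$ and none when $d=1$. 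Either way $U$ is simply connected, and it is unbounded or contains at most one critical point.

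The one point requiring care — and the step I expect to be the main obstacle — is the global bookkeeping that guarantees the components $U$ of $f^{-1}(V)$ are obtained from the components $W$ of $f^{-1}(V^*)$ in the clean way described above, i.e.\ that no component of $f^{-1}(V)$ contains two or more preimages of $z$, and that the nested-component hypothesis of Proposition~\ref{Singularities} is genuinely available. This is handled by shrinking $r$: for $U$ a component of $f^{-1}(V)$ and $p,p'\in U\cap f^{-1}(z)$ distinct, one chooses $r$ small enough that the components of $f^{-1}(D(z,r))$ containing $p$ and $p'$ are disjoint (using that $f^{-1}(z)$ is discrete and $f$ is open), which after relabeling reduces us to the case $|U\cap f^{-1}(z)|\le 1$; the nested family $U_r\subset U_{r'}$ of Proposition~\ref{Singularities} is then exactly the family of components through a fixed preimage, or the family with empty intersection. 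Since $z$ has at most countably many preimages one may make a single choice of $r$ working for all of them simultaneously if desired, but for the statement it suffices to treat each component separately. With this reduction in place the two cases of Lemma~\ref{Coverings}(b) give precisely the two alternatives in the conclusion.
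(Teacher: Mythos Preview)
Your approach is the paper's: choose $V$ simply connected with $V\cap S(f)=\{z\}$, note that $f$ is a covering over $V\setminus\{z\}$, and invoke Lemma~\ref{Coverings}(b). The paper does this in three lines, working directly with a component $U$ of $f^{-1}(V)$ and applying the lemma to $f:U\setminus f^{-1}(z)\to V\setminus\{z\}$.

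The ``main obstacle'' you flag is not one. If $U$ is a component of $f^{-1}(V)$, then $U\setminus f^{-1}(z)$ is automatically connected: $f^{-1}(z)\cap U$ is a closed discrete subset of the open connected planar set $U$, and removing such a set preserves connectedness (any path in $U$ meets it in finitely many points and can be perturbed off them). Hence $U\setminus f^{-1}(z)$ is already a \emph{single} component $W$ of $f^{-1}(V^*)$, and Lemma~\ref{Coverings}(b) applied to this $W$ forces $U\cap f^{-1}(z)$ to be either empty (exponential case, $U=W$ simply connected, unbounded because the infinite fiber over any $w\in V^*$ would otherwise accumulate in $\overline{U}\subset\C$ contradicting discreteness of $f^{-1}(w)$) or a single point (the puncture in the $z^d$ model). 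So no shrinking of $r$ is needed, and the first $V$ you picked works uniformly for every component. This is worth internalizing because your fallback sentence ``for the statement it suffices to treat each component separately'' does not actually deliver the conclusion: the corollary asks for one neighborhood $V$ that works for all components at once.
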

\begin{proof}Let $V$ be any simply connected neighborhood of $z$ such that $V\cap 
S(f)=\{z\}$ which exists  because  $z$ is not  accumulated by singular values. Let $U$ be a component of  $f^{-1}(V)$. By  Lemma~\ref{Coverings}, either  $f:U\ra V$ is univalent, or $f:U\setminus \{f^{-1}(z)\} \ra V\setminus\{z\}$ is a covering of $\Dstar$. In this case,  either $U$ contains exactly one critical point and $f$ is equivalent to $z^d$, or $U$ is unbounded and  simply connected and $f$ is equivalent to the exponential. 
\end{proof}

\subsection{The Separation Property} \label{separation}

In this section we want to give some details about the Separation Property and the class of functions which satisfy it. 

Let $\BB$ be the class of entire functions with bounded set of singular values, as defined in the introduction. We set 
\[
\BBt=\{ f=g_1\circ \cdots \circ g_k \mid g_i \in \BB \text{ and has  finite order}\}.
\]
For this class of functions, it is shown in \cite{R3S} that points whose orbits escape to infinity are organized in unbounded curves mapped to each other according to some symbolic dynamics. More precisely, there are maximal injective curves $g_{\s}:(0,\infty)\ra\C$, where $\s\in\Z^\N$  is a labelling, such that:
\begin{itemize}
\item $\underset{t\ra\infty}\lim |f^n(g_\s(t))|=\infty\,\  \forall n\geq 0$;
\item $\underset{n\ra\infty}\lim |f^n(g_\s(t))|=\infty$  uniformly in $[t,\infty)$.
\end{itemize} 
 These curves are called \emph{dynamic rays}, or just \emph{rays}, and are in many ways  analogous to external rays of polynomials. If $\lim_{t\ra0}g_{\s}(t)$ exists, it is said that the ray $g_\s$ \emph{lands}. A  ray  $g_\s$ is called \emph{periodic} if for some $k>0$, $f^{k}(g_\s)\subset g_\s$.

Although it is plausible to think that periodic rays land unless they contain a singular value, for the time being this is only proven for  polynomial or exponential dynamics. A sufficient condition  for a periodic  ray $g_\s$ to land (see \cite{Fa,rem08}) is that  $g_s\cap P(f)=\emptyset$, where $P(f)$ denotes the postsingular set  
\[
P(f):= \ov{\bigcup_{s\in S(f)} \bigcup_{n\geq 0} f^n(s)}.
\]
Hence to ensure that all periodic rays land we may require, for example, that the postsingular set is bounded (see e.g. \cite{De}), although this is a very strong condition.


Under the assumption that $f\in\BBt$ and that periodic rays land, it is shown in \cite[Corollary D]{BF} that $f$ satisfies the Separation Property. More precisely the following result is proven.

\begin{thm}[Separation Theorem]\label{Separating Fatou components} If $f\in\BBt$ and periodic rays land,  any two periodic Fatou components can be separated by two periodic rays landing at the same point.   Hence the boundaries of any two bounded Fatou components intersect in at most one point. 
\end{thm}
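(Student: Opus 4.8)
The plan is to transplant the orbit--separation machinery of Goldberg--Milnor \cite{GM} from polynomials to the class $\BBt$, with the dynamic rays of \cite{R3S} playing the role of external rays. First I would reduce to an invariant configuration: after passing to a suitable iterate $f^{N}$ (which is again in $\BBt$ and has the same Fatou and Julia sets) the two given periodic Fatou components $U_{1},U_{2}$ are invariant, and every ray used below may be taken periodic for $f$ and hence landing by hypothesis. A landing point of a periodic ray is periodic, so by the Snail Lemma it is repelling or parabolic; in particular no periodic ray lands on the boundary of a Siegel disk or on a Cremer point, and this constraint shapes the rest of the argument.

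The core step is to produce an \emph{invariant ray pair} separating $U_{1}$ from $U_{2}$: a repelling or parabolic fixed point $z_{0}\in\JJ(f)$ together with two $f$--invariant rays $\gamma,\gamma'$ landing at $z_{0}$, so that $\Gamma:=\gamma\cup\gamma'\cup\{z_{0},\infty\}$ is a Jordan curve in $\hat\C$ with $U_{1}$ and $U_{2}$ in distinct complementary components. I would argue by cases on the type of $U_{i}$. If $U_{i}$ is an attracting or parabolic basin, one shows that $\partial U_{i}$ carries a repelling (resp.\ parabolic) fixed point accessible along an invariant ray -- a ``root'' of $U_{i}$; this is where the transcendental setting is most delicate, since the B\"ottcher/Riemann--map description available for polynomials is missing and one must reason directly with accesses to the boundary, exploiting that periodic rays land. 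If $U_{i}$ is a (necessarily bounded) Siegel disk, its boundary carries no periodic point, so instead one uses that $U_{i}$ lies in a bounded complementary component of \emph{some} invariant ray pair. One then glues the data attached to $U_{1}$ and to $U_{2}$: two distinct invariant ray pairs meet only at $\infty$, their rays being pairwise disjoint curves and their landing points distinct repelling or parabolic points, so one lies entirely on one side of the other; a short combinatorial argument extracts a single $\Gamma$ with $U_{1}$ and $U_{2}$ on opposite sides.

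Granting such a $\Gamma$, the final assertion is immediate: since $U_{1}$ and $U_{2}$ lie in distinct complementary components of the Jordan curve $\Gamma$, we get $\overline{U_{1}}\cap\overline{U_{2}}\subseteq\Gamma$ and hence $\partial U_{1}\cap\partial U_{2}\subseteq\{z_{0},\infty\}$; as $U_{1},U_{2}$ are bounded, $\infty$ is excluded and the two boundaries meet in at most the single point $z_{0}$. For an arbitrary pair of bounded Fatou components one pulls this back: preimages of the rays of $\Gamma$ are again dynamic rays landing at the (preperiodic) preimages of $z_{0}$, they assemble into ray pairs separating the corresponding preimage components, and since every preperiodic Fatou component eventually maps into a periodic one the conclusion propagates along the inverse orbit -- which, in the absence of wandering domains, is the general bounded case. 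I expect the genuine obstacle to be the middle step: controlling the possibly infinitely many periodic rays of a transcendental map, guaranteeing that they land where they should, and locating a root on the boundary of an attracting or parabolic basin without the global tools available for polynomials; it is precisely here that the finite order and class $\BB$ hypotheses enter, through \cite{R3S}.
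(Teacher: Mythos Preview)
The paper does not prove this theorem: it is quoted as \cite[Corollary~D]{BF} and no argument is given here. So there is no ``paper's own proof'' to compare against; the actual proof lives in the companion paper \cite{BF}. Your overall strategy---transplant the Goldberg--Milnor orbit-separation machinery to $\BBt$ with the dynamic rays of \cite{R3S} replacing external rays---is indeed the approach taken there, so at the level of architecture you are on the right track.

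That said, two points in your outline are genuine gaps rather than details to be filled in. First, the ``short combinatorial argument'' gluing the data attached to $U_1$ and $U_2$ into a single separating ray pair is not short in the transcendental setting: for polynomials one has finitely many fixed rays and a degree count (there are $d-1$ fixed external rays), which drives the Goldberg--Milnor induction; for $f\in\BBt$ there are infinitely many periodic addresses and no global B\"ottcher coordinate, so the existence and placement of the required fixed rays needs a substantially different argument. This is the real content of \cite{BF}, and your sketch does not indicate how you would replace the counting. Second, your extension from periodic to arbitrary bounded Fatou components is not sound as written: you pull back ray pairs and say preimages of rays are again dynamic rays, but for transcendental maps a preimage of a ray need not be a ray (branches over an asymptotic value can produce bounded preimage curves), and you then invoke ``in the absence of wandering domains'', which is \emph{not} a hypothesis of the theorem---Bishop's construction \cite{bishop} shows wandering domains do occur in $\BB$. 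In fact the Separation Property used later in the paper concerns only \emph{periodic} Fatou components, and that is all the applications (Corollary~\ref{Preperiodic hidden Components}) require; you should either restrict the second sentence to periodic components or supply a correct pullback argument that does not assume away wandering domains.
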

A consequence of Theorem~\ref{Separating Fatou components} is the following corollary \cite[Corollary E]{BF}, which is directly related with our problem at hand. 

\begin{cor}[Preperiodic hidden components]\label{Preperiodic hidden Components}
If $f$ is an entire function satisfying the Separation Property then  any hidden component of a bounded forward invariant Siegel disk is preperiodic to the Siegel disk itself or a wandering domain with bounded orbit. 
\end{cor}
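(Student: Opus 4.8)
The plan is to follow the forward orbit of a hidden component $H$ and to split into three mutually exclusive cases: $H$ is eventually mapped onto $\De$ itself (so it is preperiodic to $\De$); $H$ is a wandering domain, in which case I must check that its orbit is bounded; or $H$ is eventually periodic with its orbit passing only through hidden components, and this last possibility is exactly what the Separation Property will exclude.

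First I would pin down the mapping picture. Recall from the discussion of hidden components that each $H_i$ is a Fatou component with $\partial H_i\subset\dde$, that $f(\ov{H_i})\subset\Deh$, and that $\dde=\partial\De_\infty=\partial\Deh$, so the topological frontier of $\Deh$ lies in $\JJ(f)$. Let $W$ be the Fatou component containing $f(H_i)$; then $W$ meets $\Deh$, and since $W$ is connected and disjoint from $\dde=\partial\Deh\subset\JJ(f)$ it cannot meet both $\Deh$ and $\De_\infty$, whence $W\subset\Deh$. As $W$ also avoids $\dde$ and $\Deh\setminus\dde=\De\cup\bigcup_j H_j$ is a disjoint union of open sets, connectedness forces $W=\De$ or $W=H_j$ for some $j$. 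Thus $f$ maps every hidden component onto $\De$ or onto another hidden component, and once the orbit of $H$ reaches $\De$ it stays there by forward invariance. Consequently either $f^n(H)=\De$ for some $n$, and $H$ is preperiodic to $\De$; or $f^n(H)$ is a hidden component for all $n$, in which case the entire orbit lies in the bounded set $\Deh$ and hence is bounded. In this second situation, if $H$ is wandering we are done, and the only remaining alternative is that $H$ is eventually periodic, which produces a \emph{periodic} hidden component $V$. It remains to rule this out.

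Suppose then that $V$ is a periodic hidden component. Its boundary satisfies $\partial V\subset\dde$, and $\partial V$ contains more than one point: a nonempty bounded connected open set cannot have a one-point boundary $\{x\}$, for otherwise $\C\setminus\{x\}$ would split as the disjoint union of the nonempty open sets $V$ and $\C\setminus\ov V$, contradicting its connectedness. Hence $\ov V\cap\ov\De\supset\partial V$ has at least two points. But $V$ and $\De$ are two distinct periodic components of the Fatou set (distinct because $V\subset\C\setminus\ov\De$ is disjoint from $\De$), so the Separation Property forces $\ov V\cap\ov\De$ to contain at most one point --- a contradiction. Therefore no periodic hidden component exists, so in the remaining case $H$ must be a wandering domain, with bounded orbit as already shown.

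I expect the genuine obstacle to be concentrated in this last step: a priori nothing prevents a hidden component from being periodic, and the only leverage available is the Separation Property applied to $V$ and $\De$, made effective by the elementary observation that a bounded hidden component shares with $\ov\De$ its whole boundary --- infinitely many points --- rather than just one. The mapping analysis in the second paragraph, by contrast, is routine point-set topology once one has the identities $\partial H_i\subset\dde=\partial\De_\infty$ already established for bounded invariant Siegel disks.
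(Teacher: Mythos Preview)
Your argument is correct and follows the same route as the paper's own proof: hidden components are Fatou components whose forward images stay in $\Deh$, so the orbit of a non-wandering hidden component that never reaches $\De$ yields a periodic hidden component $V$, and then the Separation Property applied to the pair $V,\De$ contradicts $\partial V\subset\partial\De$ together with $|\partial V|\ge 2$. The paper's proof is a two-line sketch of exactly this idea; you have simply unpacked the topological steps (that the Fatou component containing $f(H_i)$ must be $\De$ or some $H_j$, that the orbit is bounded, and that a bounded open set cannot have one-point boundary) which the paper leaves implicit.
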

The corollary follows quite directly from the Separation Theorem. If a hidden component $H$ is  periodic, its boundary can intersect $\partial \De$ in at most one point. But $\partial H \subset \partial \De$ and hence this is impossible.

\subsection{Properties of  hats}

Given a bounded set $X\subset \C$, recall that $\widehat{X}$ denotes the union of $\ov{X}$ and the bounded components of $\C\setminus \ov{X}$.  (By the name of component we always mean connected component.) Observe that $\ov{X}$ is always closed and full. 
 
The following proposition summarizes the facts that we shall use about ``hats''. They will only be used in the proof of Proposition C, in Section \ref{proofC}. Most of them can be found or deduced from results in \cite[Paragraph 4.11.1]{CR}, but we include the proof here for completeness and self-containment. 

\begin{prop}\label{Properties of hats}
Let $X, Y$ be  bounded subsets of $\C$,  $V$ be a simply connected neighborhood  of $\ov{X}$. Then the following facts hold:
\begin{enumerate}
\item[\rm (1)] $Y\subset \Xhat \Longleftrightarrow \ov{Y}\subseteq \Xhat\Longleftrightarrow \Yhat\subseteq \Xhat$.
\item[\rm (2)] Let $g$ be a univalent function defined in $V$. Then $\hat{g(X)}=g(\Xhat)$. 
\item[\rm (3)]  Let $f$ be an entire function without singular values in $V\setminus\Xhat$. 
Let $U$ be a bounded connected component of $f^{-1}(V)$. For a set $A$ compactly contained in $V$ we set $f_U^{-1}(A)=f^{-1}(A)\cap U$.  Then 
$$\widehat{f_U^{-1}(X)}= f_U^{-1}(\Xhat). $$
\end{enumerate}
\end{prop}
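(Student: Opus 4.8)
The three statements are essentially topological facts about full hulls in the plane, and I would prove them in the stated order, using repeatedly the elementary fact that $\widehat X$ is the complement of the unique unbounded component of $\C\setminus\ov X$ (so $\widehat X$ is compact, connected if $\ov X$ is, and full), together with the fact that $\C\setminus\widehat X$ is connected. For (1): since $\ov Y\supseteq Y$ and $\widehat Y\supseteq\ov Y$, the right-to-left implications are trivial, so it suffices to show $Y\subset\widehat X\Rightarrow\widehat Y\subseteq\widehat X$. The key observation is that $\widehat X$ is compact and full, hence $\C\setminus\widehat X$ is a connected (open) set that does not meet $Y$; therefore $\C\setminus\widehat X$ is contained in a single component of $\C\setminus Y$, and being unbounded it lies in the unbounded component of $\C\setminus\ov Y$. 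Taking complements gives $\widehat Y\subseteq\widehat X$. I would phrase this cleanly via: $A$ compact and full $\iff$ $\C\setminus A$ connected and unbounded, so that $A\subseteq B$ with $B$ compact full and $A$ full forces $\widehat A\subseteq B$ whenever $\C\setminus B$ avoids $A$ — but the cleanest route is just the connectedness argument above.

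For (2): $g$ is a homeomorphism of $V$ onto $g(V)$, and $g(V)$ is an open simply connected (hence full, with connected complement in $\hat\C$) neighborhood of $g(\ov X)=\ov{g(X)}$. A homeomorphism of open sets sends components to components and respects the separation structure; I would argue that $g$ maps $\widehat X$ (which is compact, contained in $V$, with $V\setminus\widehat X$ a union of annular/simply connected pieces) onto a compact full set containing $\ov{g(X)}$, and conversely using $g^{-1}$ on $g(V)$. The one point needing care is boundedness/properness: $\widehat X$ is compactly contained in $V$ (since $\ov X$ is and $\widehat X$ is obtained by filling bounded complementary components, all of which are also compactly contained in $V$ by the maximum principle argument, as $V$ is full), so $g(\widehat X)$ is a well-defined compact subset of $g(V)$, and $g$ restricted to a neighborhood of $\widehat X$ is a homeomorphism onto its image, which is all that is used. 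Then $g(\widehat X)$ is full because $g$ is an open map and the complement argument transfers.

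For (3): this is the substantive one and I expect it to be the main obstacle. The hypothesis is that $f$ has no singular values in $V\setminus\widehat X$, and $U$ is a bounded component of $f^{-1}(V)$. The strategy is: since there are no singular values in $V\setminus\widehat X$ and (one should first reduce to the case where) $V\setminus\widehat X$ is connected — actually $V\setminus\widehat X$ need not be connected, but each component of it is disjoint from $S(f)$ — the restriction $f\colon f^{-1}(V\setminus\widehat X)\cap U\to V\setminus\widehat X$ is a covering map onto each component it hits. A bounded component $U$ maps properly onto $V$ with finite degree (by Corollary \ref{separatingbranches}-type reasoning, or directly: $f|_U\colon U\to V$ is proper since $U$ is a bounded component of the preimage of the open set $V$, so it is a branched cover of finite degree), and over the subset $V\setminus\widehat X$ it is an honest (unbranched) cover. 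One inclusion, $f_U^{-1}(X)\subseteq f_U^{-1}(\widehat X)$ hence $\widehat{f_U^{-1}(X)}\subseteq\widehat{f_U^{-1}(\widehat X)}$, needs $f_U^{-1}(\widehat X)$ to be full; and $f_U^{-1}(\widehat X)=U\setminus f_U^{-1}(V\setminus\widehat X)$, so its complement in $U$ is $f_U^{-1}(V\setminus\widehat X)$, which I must show meets the unbounded component of $\C\setminus f_U^{-1}(X)$. For the reverse inclusion, a point $w\in f_U^{-1}(\widehat X)$ that is \emph{not} in $\widehat{f_U^{-1}(X)}$ would lie in the unbounded complementary component, giving a path in $\C\setminus f_U^{-1}(X)$ from $w$ to $\infty$; push part of it forward by $f$ to get a path from $f(w)\in\widehat X$ out of $\widehat X$ staying in $V\setminus X$ — but $f(w)\in\widehat X$ and the filled structure plus the covering property over $V\setminus\widehat X$ should force a contradiction by lifting. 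The clean way to run both directions simultaneously is: $\C\setminus\widehat{f_U^{-1}(X)}$ is connected and unbounded; I claim it equals the union of the unbounded component of $\C\setminus U$ together with $f_U^{-1}(V\setminus\widehat X)$ and possibly the bounded components of $\C\setminus\ov U$ — then I verify this set is connected (using that $f^{-1}(V\setminus\widehat X)\cap U$ is connected, or handling its components one at a time, each being an annulus or disk mapped as a covering onto a component of $V\setminus\widehat X$), is disjoint from $f_U^{-1}(X)$, and is unbounded, while its complement $f_U^{-1}(\widehat X)$ is exactly $\widehat{f_U^{-1}(X)}$ because any bounded complementary component of $f_U^{-1}(X)$ inside $U$ must map into $\widehat X$ by the maximum-principle/argument-principle (its boundary maps into $\ov X$, so it maps into $\widehat X$). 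The genuine difficulty is bookkeeping the components of $V\setminus\widehat X$ and of their preimages, and checking connectedness of $f_U^{-1}(V\setminus\widehat X)$ or adapting the argument when it is disconnected; I would isolate this as a lemma that $f_U^{-1}$ of a full connected set compactly contained in $V$ is full, proved via the argument principle applied to $f|_U$, and then deduce the proposition by applying it to $\widehat X$.
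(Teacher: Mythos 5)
Your treatment of parts (1) and (2) is in line with the paper's: for (1), both you and the paper rely on $\widehat X$ being closed and full (your phrasing via connectedness of $\C\setminus\widehat X$ is just the same fact spelled out), and for (2), both use that $g$ is a homeomorphism on a neighbourhood of $\widehat X$ plus a maximum-principle step to see that bounded complementary components of $\ov{g(X)}$ correspond to bounded complementary components of $\ov X$.

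Where you diverge is part (3), which you correctly identify as the substantive one, but where you propose a considerably heavier route than the paper takes. The paper does \emph{not} invoke covering theory, path lifting, or a separate ``preimage of a full set is full'' lemma. Instead it first records three consequences of $f|_U\colon U\to V$ being open and proper (which follows from $U$ being a bounded component of $f^{-1}(V)$): $\ov{X'}\subset U$ where $X'=f_U^{-1}(X)$, $f(\ov{X'})=\ov X$, and $f(\partial X')=\partial X$. From these and the maximum principle it deduces the two-way correspondence between bounded components of $\C\setminus\ov{X'}$ and bounded components of $\C\setminus\ov X$, and then simply repeats, verbatim, the pointwise two-inclusion argument it used for part (2) (``if $z\in\ov{X'}$ then $f(z)\in\ov X\subset\widehat X$; if $z$ is in a bounded complementary component then so is $f(z)$'' and the reverse). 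Your proposed auxiliary lemma --- that $f_U^{-1}$ of a full connected compact set is full --- only yields one inclusion ($\widehat{X'}\subseteq f_U^{-1}(\widehat X)$, via part (1)); the reverse inclusion $f_U^{-1}(\widehat X)\subseteq\widehat{X'}$ still requires exactly the bounded-component argument the paper uses, so the lemma does not shortcut the work. Moreover, as stated your lemma asks for connectedness of the set, whereas $\widehat X$ need not be connected if $\ov X$ isn't; the paper sidesteps all such component bookkeeping by arguing directly about ``bounded'' versus ``unbounded'' complementary components rather than about connectedness of $V\setminus\widehat X$ or of $f_U^{-1}(V\setminus\widehat X)$. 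Your path-lifting sketch for the reverse inclusion would probably work with effort, but it is not needed: properness of $f|_U$ plus the maximum principle already give everything.
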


\begin{proof} \ \ 

\noindent (1)  This follows because $\Xhat$ is both closed and full.

\vspace{0.3cm}
\noindent (2) Observe that 
\begin{align*}
\hat{g(X)}&=\ov{g(X)}\cup \left\{\text{bounded components of $\C\setminus\ov{g(X)}$}\right\}\\
\Xhat&=\ov{X}\cup \left\{\text{bounded components of $\C\setminus\ov{X}$}\right\}.
\end{align*}
We first show that 
 $\hat{g(X)}\subseteq g(\Xhat)$. If $z\in \ov{g(X)}$, $g^{-1}(z)\in\ov{X}$ { because $g$ is a homeomorphism}, hence     $g^{-1}(z)\in\Xhat$ and $z\in g(\Xhat)$. If $z$ is in a bounded component of $\C\setminus\ov{g(X)}$ then $g^{-1}(z)$ belongs to a bounded component of $\C\setminus\ov{X}$  by the { maximum principle} hence $g^{-1}(z)\in \Xhat$ and $x\in g(\Xhat)$.

We now show that  $\hat{g(X)}\supseteq g(\Xhat)$. If $z\in g(\Xhat)$ then $g^{-1}(z)\in \Xhat$.  If $g^{-1}(z)\in \ov{X}$ then $z\in g(\ov{X})=\ov{g(X)}$ because $g$ is a homeomorphism, hence $z\in \hat{g(X)}$. Otherwise  $g^{-1}(z)$ is in a bounded component of $\C\setminus \ov{X}$ hence $z$ is in a bounded component of $\C\setminus \ov{g(X)}$ and $z\in \hat{g(X)}$.

\vspace{0.3cm}
\noindent (3)  Let $\{X_i\}_i$ be the components of $f^{-1}(X)$ which are contained in $U$, and $X'= \bigcup_i X_i=f_U^{-1}(X)$.
 By definition $f(X')=X$.  Since $\fum(X)$ has a finite number of components, $\partial X'=\bigcup_i \partial X_i$.  Since $f:U\to V$ is open and proper  it follows that $\ov{X'} \subset U$, $f(\ov{X'})=\ov{X}$ and $f(\partial X')=\partial X$. Consequently, since $f$ has no poles, by the maximum principle $f$ maps bounded components of $\C\setminus \ov{X'}$ to bounded components of $\C\setminus \ov{X}$.  Conversely, if $W$ is a bounded component of $\C\setminus \ov{X}$, then $f_U^{-1}(W)$ consists of a finite number of bounded components of $\C\setminus \ov{X'}$.
  


 
  
 We first show that $\hat{X'}\subseteq \fum{(\Xhat})$.  If $z\in \ov{X'}$, $f(z)\in\ov{X}$  hence $f(z)\in\Xhat$ and $z\in \fum(\Xhat)$ by definition. If $z$ is in a bounded component of $\C\setminus\ov{X'}$ then  $f(z)$ belongs to a bounded component of $\C\setminus\ov{X}$ hence $f(z)\in \Xhat$. Then by definition $z\in\fum(\Xhat)$. 

We now show that $\hat{X'}\supseteq \fum{(\Xhat})$. If $z\in \fum{(\Xhat})$ then $f(z)\in \Xhat$.   If $f(z)\in \ov{X}$,  then $z\in \ov{X'}\subset \hat{X'}$. Otherwise  $f(z)$ is in a bounded component of $\C\setminus \ov{X}$, hence $z$ is in a bounded component of $\C\setminus \ov{X'}$.
\end{proof}


\section{Proof of Propositions  A, B and Proposition 1}\label{AB}

Let $\De$ be a bounded forward invariant Siegel disk of $f$, and $H_i$ the hidden components, that is  the bounded components of $\C\setminus\ov{\De}$. By definition, 
\[
\deh=\dde\cup\De \cup \bigcup_i H_i.
\]

Let $\Det$ be the component of $f^{-1}(\Deh)$ containing $\Deh$. 

The proof of part (a) in Proposition A is due to  Hermann (\cite{He2}); we sketch it  for completeness and refer to  \cite[Paragraph 4.2]{CR} for details. We give  a proof of part (b).

\begin{proof}[Proof of Proposition A] 
Since $\Deh$ {is full and }contains no singular values, and $S(f)$ is a closed set, there exists   a {simply connected } neighborhood $V$ of $\deh$ such that $\ov{V}\cap S(f)=\emptyset$. Let $U$ be the component of $f^{-1}(V)$ containing $\det$. 
 \begin{itemize}
\item[(a)]  By Lemma~\ref{Coverings}, $f:U\ra V$ is a homeomorphism, hence $\Det=\Deh$. The complement of $\Deh$ can be uniformized to the complement of the unit disk $\ov{\D}$, and the uniformization map conjugates $f$  to an analytic map $g$ defined in a small annulus around $\ov{\D}$. Because $\Deh$ is locally backward invariant, using Schwarz reflection's principle $g$ can be extended to an analytic map in a neighborhood of $\S^1$, and its restriction to $\S^1$ is an analytic circle map, which has degree  one because $f$ is univalent in a neighborhood of $\Deh$. The rotation number of $g$ can be shown to be the same as the rotation number of $\Delta$, i.e. $\theta$;  if $\theta \in\HH$, by Hermann's theorem \cite{He1} the map $g$ can be linearized via an analytic map. Then the  Riemann map transports it to a linearization for $f$ in a neighborhood of the Siegel disk, contradicting its maximality.
\item[(b)]
As in part (a), by Lemma~\ref{Coverings} the map $f$ from $U\ra V$ is univalent, hence the unique preimage of $\De$ in $U$ is $\De$ itself.
Using  Corollary~\ref{Preperiodic hidden Components} and the nonexistence of wandering domains,  any hidden component of $\deh$ is preperiodic to $\De$. So if a hidden component $H$ exists, there exists $n>0$ such that $f^n(H)=\De$ and  $f^{n-1}(H)\neq\De$. Hence  $f^{n-1}(H)$ is a preimage of $\De$ in $U$, which is a contradiction.
\end{itemize}
\end{proof}

We now proceed to prove Propositions B and Proposition 1. 

Proposition B relies strongly on the following lemma, which is a modification of a  theorem due to Fatou  asserting that the boundary of a Siegel disk is contained in the union of the $\omega$-limit set of the set of critical values (see Corollary 14.4  in \cite{Mi}). 

\begin{lem}\label{Local Mane} Let $f$ be an entire function with a bounded invariant Siegel disk $\De$. {Assume that there are only finitely many singular values interacting with $\De$.} 
 Suppose that  there exists a point $z\in\dde$ and a  neighborhood $V$ of $z$ such that $\omega(s)\cap V=\emptyset$ for all  singular values  which are in not in  $\dde$. Then there is at least one  recurrent critical point on $\dde$. 
\end{lem}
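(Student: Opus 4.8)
\textbf{Proof plan for Lemma~\ref{Local Mane}.}
The strategy is to localize Fatou's classical argument: rather than using that $\dde$ lies in the $\omega$-limit of \emph{all} singular orbits, I want to show that near the point $z$ the boundary $\dde$ must be accumulated by the orbit of a \emph{critical} point, and then upgrade that critical point to a recurrent one. First I would fix the hypothesis: by assumption there is a neighborhood $V$ of $z$ which is disjoint from $\omega(s)$ for every singular value $s\notin\dde$; shrinking $V$, and using that only finitely many singular values interact with $\De$ together with condition (c) in the definition of interacting singular values, I may assume $\ov V$ is disjoint from $\ov{\bigcup_{s\notin\dde}\omega(s)}$ and from the non-interacting singular orbits, and that $\ov V$ meets $S(f)$ in at most the (finitely many) singular values lying on $\dde$. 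The point of this is that any inverse branch of $f^{-n}$ we try to follow along $\dde$ near $z$ can only be obstructed by singular values that are themselves on $\dde$, i.e. by critical values in $\dde$ (asymptotic values on $\dde$ will have to be handled or excluded separately — see below).

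Next I would run the Fatou/Ma\~n\'e-type normality argument locally. Consider the family of inverse branches $\{g_n\}$ of $f^{n}$ obtained by pulling back along the rotation orbit of $z$ inside $\dde$: the forward orbit $\{f^{n}(z)\}$ is dense in $\dde$ (irrational rotation on the boundary circle of the linearizing coordinate), so pick a subsequence $f^{n_k}(z)\to z$. If one could always define univalent inverse branches $g_{n_k}$ on a fixed disk $D(z,r)\subset V$ with $g_{n_k}(f^{n_k}(z))=z$, the standard argument shows these branches omit a definite amount of area (they miss $\De$ but their images shrink, which via Koebe/area estimates forces the derivatives $|(f^{n_k})'(z)|$ to blow up, contradicting that $z\in\dde\subset\JJ(f)$ so the orbit of $z$ can't be ``expanding towards'' $z$ in that strong sense) — more precisely, the impossibility of such a univalent pullback forces some $f^{j}(z)$, $0\le j<n_k$, to be a critical value, i.e. forces a critical point $c$ with $f^{j+1}(z)$ in the forward orbit and $f^{i}(c)\in V$ for suitable $i$. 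Iterating this observation along the dense orbit produces critical points whose orbits enter every neighborhood of $z$; since there are only finitely many critical points whose orbits meet $\ov V$ (they must be critical preimages of the finitely many critical values on $\dde$, hence finitely many by the classification in Corollary~\ref{separatingbranches}), one of them, call it $c_0$, must have $z\in\omega(c_0)$, and a further pigeonhole over the finite set shows $c_0$ can be chosen with $c_0\in\dde$ and in fact $c_0\in\omega(c_0)$, i.e. recurrent. I would also need the soft fact that $\omega(c_0)\subset\dde$: since $c_0\in\dde$ and $\dde$ is forward invariant and closed, the whole forward orbit and its limit set stay in $\dde$.

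The main obstacle I anticipate is twofold. First, the \emph{asymptotic values}: the argument above shows that when a univalent inverse branch fails to exist along the orbit, the obstruction is a singularity of $f^{-1}$ on $\dde$, which a priori could be an asymptotic value rather than a critical value; I would need to argue that an asymptotic value acting as such on the boundary is incompatible with $\De$ being bounded (its preimage ``at infinity'' would force non-local-connectivity of the type that cannot occur for a bounded Siegel disk with rotation number in $\HH$), or invoke the finiteness/boundedness hypotheses to rule it out — this is the delicate transcendental-specific point and likely the crux of the proof. Second, making the area/distortion estimate rigorous: one must control the inverse branches $g_{n_k}$ on a \emph{fixed} round disk, which requires knowing the branches don't ``run out of room'' before reaching size comparable to $r$; this is exactly where the hypothesis that $\ov V$ avoids all the \emph{other} singular orbits is used, via Corollary~\ref{separatingbranches} and Lemma~\ref{Coverings}, to guarantee that the relevant components of $f^{-1}(V)$ are simply connected and the pullbacks are either univalent or ramified only over the finitely many critical values on $\dde$. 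Once these two points are secured, the conclusion — existence of a recurrent critical point on $\dde$ — follows by the finiteness/pigeonhole argument sketched above.
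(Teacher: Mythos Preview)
Your outline has the right flavor --- localize Fatou's argument, push inverse branches until a singularity obstructs, then pigeonhole for recurrence --- but two of the concrete steps are genuinely wrong, and they are exactly the steps where the paper's argument differs from yours.

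First, your contradiction mechanism is incorrect. You claim that if the univalent inverse branches $g_{n_k}$ exist on a fixed disk, Koebe/area estimates force $|(f^{n_k})'(z)|\to\infty$, and that this ``contradicts $z\in\JJ(f)$''. It does not: expansion along an orbit is perfectly compatible with (indeed typical of) Julia-set membership. The actual contradiction, as in the paper, runs the other way. One builds, for every $n$, the \emph{unique} branch $\phi_n$ of $f^{-n}$ on $V$ which sends $V\cap\De$ into $\De$; this family is normal (it omits any point outside $\ov\De$) and cannot have a constant limit (grand orbits of interior points do not accumulate on $\JJ(f)$), so a subsequential limit $\phi$ is open and $\phi(V)$ is a neighborhood of some $y\in\dde$ that is mapped into $V$ by infinitely many forward iterates --- contradicting $y\in\JJ(f)$. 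No density of the forward orbit of $z$ in $\dde$ is needed (and none is available in general: the linearizing coordinate lives on $\De$, not on $\dde$).

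Second, and more importantly, your sketch does not place the critical point on $\dde$. You say the obstruction to univalence is ``a critical point somewhere'' with orbit entering $V$, and then try to argue finiteness via Corollary~\ref{separatingbranches}; but that corollary gives no global bound on critical preimages of a critical value (a transcendental $f$ can have infinitely many), so your pigeonhole has no finite set to work with. The paper avoids this by choosing the rotation-compatible branch at each step: since $f|_\De$ is injective, for each singular value $v_i\in\dde$ there is a \emph{unique} preimage $a_i\in\dde$ for which the local inverse near $v_i$ sends $\De$ to $\De$. Under the contradiction hypothesis ``no critical point on $\dde$'' these $a_i$ are regular, the local inverses $\psi_i$ patch by the identity principle, and the induction building $\phi_n$ goes through. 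Thus the failure of the construction forces some $a_i\in\dde$ to be a \emph{non-regular} preimage; boundedness of $\De$ rules out the asymptotic case (the preimage would be at infinity), so $a_i$ is a critical point on $\dde$. Finiteness for the recurrence step then comes for free: critical points are isolated and $\dde$ is compact. Your handling of the asymptotic case and of recurrence is essentially fine once the critical point is pinned to $\dde$ in this way, but as written the argument does not get there.
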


\begin{remark}
Under weaker hypothesis, it follows from  \cite[Corollary 2.9]{RvS}  that the the boundary of $\Delta$ is contained in the $\omega$-limit set of a recurrent or almost recurrent (see \cite{RvS} for a definition) \emph{singular value},  provided that the function in question has finitely many singular values.  But nothing is said (or easily deduced) about critical points. 
\end{remark}

\begin{proof}

Let $v_1\ldots v_q\subset \dde$ be the finitely many singular values whose forward orbits intersect $V$. By hypothesis, it is possible to shrink $V$ (if necessary) to ensure that it contains no iterates of any singular value other than those of $v_1, \ldots, v_q$. For simplicity, let us also shrink it further so that at most one $v_i$ belongs to $V$.  Let $\epsilon$ be smaller than the  minimal distance between any two such singular values, and such that there are no other singular values in an $\epsilon$-neighborhood of $\dde$. 

We first show that there is at least one critical point in $\dde$. Suppose by contradiction that there are no critical points in $\dde$. Then  {all preimages of each $v_i$ which are contained in $\dde$ are regular and there exists at least one of them, since $\dde$ is bounded  and $f:\dde\ra \dde$ is surjective. Let $a_i \in \dde$ be the unique regular preimage of $v_i$ for which  the unique branch of $f^{-1}$ defined on  $D_i:=\D_\epsilon(v_i)$ mapping $v_i$ to $a_i$,  also maps $D_i\cap \Delta$ to $\De$. By injectivity of $f\mid_\De$, the point $a_i$ is unique for each $i$. We denote this special branch of $f^{-1}$ by $\psi_i$.}
\begin{claim}
Under the contradiction assumption, for each $n\in\N$ there is a unique  univalent  branch  $\phi_n$ of $f^{-n} $ defined on {$V$} which maps {$V\cap \De$} to $\De$.
\end{claim}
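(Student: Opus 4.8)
The plan is to prove the Claim by induction on $n$, constructing the branches $\phi_n$ of $f^{-n}$ and simultaneously verifying that the domain $V$ stays clean of singular values under all backward iterates, so that the relevant inverse branches exist and are univalent. I would set things up so that at each stage $\phi_n(V)$ is a simply connected set containing $z$ (indeed containing a piece of $\partial\Delta$ through a preimage of $z$ inside $\partial\Delta$), on which $f^{-1}$ has a well-defined univalent branch mapping the Siegel-disk part to the Siegel disk.

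\medskip

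\noindent\textit{Base case.} For $n=1$: since there are no critical points in $\dde$ by the contradiction hypothesis, the point $z\in\dde$ has some preimage $z_1\in\dde$ (using surjectivity of $f\colon\dde\to\dde$), and since $\dde$ contains no critical points and no singular values lie in the $\epsilon$-neighborhood of $\dde$ other than the $v_i$, we may shrink $V$ if necessary and take the branch of $f^{-1}$ on $V$ fixing $z_1$; by the same reasoning that defined $\psi_i$, this branch maps $V\cap\Delta$ into $\Delta$ (because $f|_\Delta$ is a rotation and is in particular invertible with image $\Delta$, and the preimage component of $\Delta$ through $z_1$ is $\Delta$ itself since $\widehat\Delta$ — and in particular $\Delta$ — has no singular value preimages other than critical ones, which are excluded). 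Call this $\phi_1$. Univalence is automatic because $V$ contains no singular values and no critical points of $f$ sit over $V$ along this branch.

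\medskip

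\noindent\textit{Inductive step.} Suppose $\phi_n$ is constructed, univalent on $V$, with $\phi_n(V\cap\Delta)\subset\Delta$ and $\phi_n(z)\in\dde$. The key point is that $\phi_n(V)$ contains no singular values and no critical points: the only way a new critical point or singular value could enter $\phi_n(V)$ is if $f^j$ of some singular value were to land in $V$ for some $j\le n$ with the singular value not among the $v_i$, which is excluded by the choice of $V$; and critical points in $\phi_n(V)$ would mean (by taking images) a critical value, hence a singular value, in $\phi_n(V)$, again a contradiction — while critical \emph{points} of $f$ lying over $V$ but off $\dde$ would, after applying $f^n$, produce a singular value in $V$ other than the $v_i$, again excluded. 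Hence on the set $\phi_n(V)$, which is simply connected and meets $\dde$ at the point $\phi_n(z)$, there is a univalent branch of $f^{-1}$; composing with the one that respects the Siegel disk near $\phi_n(z)$ (exactly as $\psi_i$ was chosen), we obtain $\phi_{n+1}=f^{-1}\circ\phi_n$ univalent on $V$ with $\phi_{n+1}(V\cap\Delta)\subset\Delta$ and $\phi_{n+1}(z)\in\dde$. Uniqueness at each stage follows because the requirement $\phi_n(V\cap\Delta)\subset\Delta$ together with injectivity of $f|_\Delta$ pins down which preimage component of $\Delta$ — hence which branch — must be used.

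\medskip

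\noindent The main obstacle I expect is the bookkeeping needed to guarantee that $\phi_n(V)$ never meets a singular value, which is what licenses applying Lemma~\ref{Coverings}/Corollary~\ref{separatingbranches} to get a univalent (rather than degree-$d$ or exponential-type) branch at each step; this is exactly where the hypotheses that only finitely many singular values interact with $\Delta$ (the $v_i$) and that $V$ was shrunk to avoid all other singular orbits are used. Once the Claim is established, the intended continuation is standard Fatou-type argument: the family $\{\phi_n\}$ is a normal family on $V$ (its images avoid, say, a fixed disk, or one argues via the bounded-orbit/Montel setup), any limit function is non-constant on $V\cap\Delta$ (being a limit of univalent maps whose restriction to $\Delta$ consists of the linearizing rotations' inverses, which do not degenerate), hence non-constant on all of $V$; but a non-constant limit of inverse iterates forces the orbit of $z$ to be non-expanding, which contradicts $z\in\dde\subset\JJ(f)$ — thereby refuting the assumption that $\dde$ has no critical point, and the recurrence statement then follows from re-running the argument with the branches $\psi_i$ around the critical preimage that must now exist.
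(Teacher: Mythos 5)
The proposal has a genuine gap at the heart of the inductive step. You assert that ``$\phi_n(V)$ contains no singular values and no critical points,'' arguing that any singular value $s$ in $\phi_n(V)$ would force $f^n(s)\in V$ with $s$ not among the $v_i$, which is excluded. But that reasoning only excludes singular values \emph{other than} the $v_i$: the $v_i$ themselves certainly \emph{can} lie in $\phi_n(V)$. Indeed, the $v_i$ belong to $\dde$, and $\phi_n(V)$ contains the point $\phi_n(z)\in\dde$, so as soon as $\phi_n(z)$ wanders near some $v_i$ (which happens for infinitely many $n$ by minimality of the rotation on $\dde$) the set $\phi_n(V)$ will contain that $v_i$. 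Similarly, critical points lying over $\phi_n(V)$ are not excluded: under the contradiction hypothesis there are no critical points \emph{on} $\dde$, but $v_i$ could perfectly well have a critical preimage off $\dde$, and such a critical point can sit inside $\phi_n(V)$. So you cannot invoke Lemma~\ref{Coverings} or Corollary~\ref{separatingbranches} to get a single univalent branch on $\phi_n(V)$ ``for free'': the set $\phi_n(V)\setminus\{v_1,\dots,v_q\}$ need not even be simply connected, and \emph{a priori} there is no single-valued inverse.

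This is exactly where the paper's proof does the real work, and it is missing from your proposal. The paper first fixes, for each $v_i$, the specific regular preimage $a_i\in\dde$ and the local branch $\psi_i$ on a small disk $D_i$ around $v_i$ characterized by the dynamical property $\psi_i(D_i\cap\Delta)\subset\Delta$. In the inductive step, each $\psi_i$ with $v_i\in\phi_n(V)$ is extended univalently to a simply connected subset of $\phi_n(V)$ avoiding the other $v_j$, and then one proves that \emph{any two} such extensions $\psi_i,\psi_j$ agree on their overlap: both send $\Delta$ into $\Delta$, $f|_\Delta$ is injective, and the identity principle forces coincidence. These pieces therefore patch to a single univalent branch $\psi$ on all of $\phi_n(V)$, yielding $\phi_{n+1}=\psi\circ\phi_n$. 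Your write-up replaces this patching argument with the false claim that there is nothing to patch. Until you address the case $v_i\in\phi_n(V)$ via the $\Delta$-preserving branches and the identity principle (or some equivalent mechanism), the induction does not close.
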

\noindent See also \cite[Corollary 14.4]{Mi}.   For $n=1$ the claim is true: 
 Since there is at most one   singular value, say  $v_1$, in $V$, the branch $\psi_1$ extends to $V$ and we may define $\phi_1$ to be this extension. 

Now assume that for some $n$ the branch $\phi_n$ is well defined,  univalent and maps {$V\cap \De$} to $\De$  and consider $\phi_n(V)$, which is therefore simply connected. If there are no singular values in $\phi_n(V)$ the claim is evidently   true also  for $n+1$. 
 By construction, the only singular values that might belong to $\phi_n(V)$ are $v_1, \ldots , v_q$, since no other singular orbit points belong to $V$. Let us suppose that $v_1,\ldots,v_k \in \phi_n(V)$. Then, for each  $1\leq i \leq k$  consider the  inverse branch $\psi_i$ defined in  $D_i$ which maps  $\psi_i(v_i)=a_i$.  Each  branch $\psi_i$ can be extended univalently to any simply connected open subset of  $\phi_n(V)$ not containing any other singular value. 
Since $a_i\in \dde$ there is a sequence of points $d_n\in\De$ converging to $a_i$ such that {$f(d_n)\ra v_i$}, so by injectivity of $f$ on $\De$ and by the identity principle each extended $\psi_i$ is the unique inverse branch mapping $\De$ to $\De$ in its domain of definition. Since  any two such branches $\psi_i, \psi_j$ overlap on some open set,  by the identity principle they coincide and patch to a univalent inverse branch $\psi$ defined on all of {$\phi_n(V)$}. {It is now clear that $\psi$  is the unique branch of the inverse mapping $\phi_{n}(V)\cap\De$ to $\De$, so that   $\phi_{n+1}$ is well defined on $V$ and maps $V\cap \De$ to $\De$.  This concludes the proof of the Claim.}



The family $\{\phi_n\}$ is normal because it omits three values. Moreover it  does not converge to infinity, so up to passing to a subsequence we can assume that it converges to a univalent function $\phi$. As $\phi_n(V\cap\De)\subset\De$ for all $n$, it follows that $\phi(V\cap\De)\subset\De$ and $\phi$ is non-constant (because $\phi(V)\cap \JJ(f)\neq\emptyset$, and grand orbits of points in $\De$ do not accumulate on the Julia set), hence open. 
 Hence there is a point $y\in\dde {\cap \phi(V)}$ such that a neighborhood of $y$ is mapped into $V$ under infinitely many iterates, contradicting $y\in \JJ(f)$.  We therefore conclude that there exists at least one singular value, say $v_1$, whose preimage in $\partial \Delta$ is non-regular. If $v_1$ is an asymptotic value, then its preimage is at infinity and hence $\Delta$ is unbounded. It follows that $v_1$ is a critical value and its preimage in $\partial \Delta$ is a critical point, as we wanted to show.

We now show recurrence.  Suppose that there are $q$ critical points  $\{c_i\}$ on $\dde$ whose orbits accumulate on $z$, and let $\CC$ be the collection of these critical points.  None of them is contained in the accumulation set of {the orbit of} any singular value  $v\notin\dde$, for otherwise  the orbit of $v$ would also accumulate on $z$ contradicting the hypothesis on $z$.
So each  $c_i\in\CC$ satisfies the hypothesis of this lemma hence is in the accumulation set of the orbit of some (possibly different) critical point  $c_j\in\dde$;  but since the orbit of $c_j$ accumulates on $c_i$ whose orbit accumulates on $z$,  then  the  orbit of $c_j$ accumulates   on $z$ as well and hence  $c_j\in\CC$. 
Let $c_1$ be any of the critical points  in $\CC$, $c_2$ be a critical point whose orbit accumulates on $c_1$, $c_3$  be a critical point whose orbit accumulates on $c_2$ and so on.   Since 
$\CC$ is finite,  $c_q=c_n$ for some $n<q$. But then iterates of $c_q=c_n$ accumulate on $c_{q-1}$ whose iterates accumulate on $c_{q-2}$  and so on until $c_n$, hence by extracting a diagonal sequence of iterates, iterates of $c_n$ accumulate on $c_n$ itself giving recurrence. This concludes the proof of Lemma 3.1.
\end{proof}

Proposition 1 in the introduction follows almost directly from Lemma \ref{Local Mane}. It is a weaker statement but somewhat easier to state. We are now ready to prove Proposition B.

\begin{proof}[Proof of Proposition B] Let $S_\Delta$ be the finite set of singular values interacting with $\De$. For any singular value  $s\notin S_\Delta$ we have that  $\omega(s)\cap\dde=\emptyset$, while for any  $s\in S_\Delta\cap F(f)$, by the classification of Fatou components and absence of wandering domains in $\Deh$,  either  $\omega(s)\cap\dde$ is empty or it consists of a finite parabolic orbit. 
So there is a point $z\in\dde$ which is not contained in $\omega(s)$ for any $s\notin\dde$ hence satisfying the hypothesis of Lemma~\ref{Local Mane}.
By this lemma, there is a recurrent critical point on $\partial \Delta$.  
\end{proof}


\section{Proof of Proposition C} \label{proofC}

We now assume that $f$ has only two singular values $v,v'$ interacting with $\Delta$,  where   $\De$ is a bounded forward invariant Siegel disk for $f$ with rotation number $\theta\in\HH$. We also assume that periodic Fatou components can be separated and that $f$ has no wandering domains in $\Deh$. In this Section we  prove that if  there is exactly one singular value $v$ in $\Deh$, then there is at least one critical preimage $c$ of $v$ on $\dde$. Recall that $\widetilde{\Delta}$ denotes the component of $f^{-1}(\Deh)$ containing $\Deh$.

If $v$ is a critical value, our setup is equivalent to the one for polynomials and 
the proof is analogous. We reproduce it here for completeness,   referring to \cite{CR} only for a part of the argument which is quite technical and goes through directly in our case.
The new part is the proof in the case that  $v$ is an asymptotic value.


The proof of Proposition C uses the following two results.

\begin{lem}[Regular preimages]\label{Regular preimages} Let $f$ be an entire function, $\Delta$ a bounded Siegel disk for $f$. 
Suppose that  $S(f)\cap\Deh=\{v\}$ where $v$ is an isolated singular value  with a regular preimage in $\Det$.  Then $\det=\deh$ and $\theta\notin\HH$.
\end{lem}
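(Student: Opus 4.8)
The idea is to mimic the structure of the proof of Proposition A(a): produce a univalent inverse branch of $f$ on a neighborhood of $\deh$ that fixes $\Deh$, then transport the dynamics to the unit circle and invoke Herman's linearization theorem to contradict maximality. The difference from Proposition A is that now $\deh$ does contain a singular value $v$, so $f$ is not a covering on a full neighborhood of $\deh$; the hypothesis that $v$ has a \emph{regular} preimage in $\Det$ is precisely what lets us still extract a well-behaved single-valued branch.

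\textbf{Step 1: construct a good neighborhood.} Since $v$ is isolated in $S(f)$, choose a simply connected neighborhood $V$ of $\deh$ with $\ov V\cap S(f)=\{v\}$, and moreover (by Corollary~\ref{separatingbranches}) arrange that the component $U$ of $f^{-1}(V)$ containing $\det$ is simply connected. On $V\setminus\{v\}$ the map $f:U\setminus f^{-1}(v)\to V\setminus\{v\}$ restricted to the appropriate piece is a covering of a punctured disk; since $v$ has a \emph{regular} preimage $p\in\Det\subset U$, Lemma~\ref{Coverings}(b) forces the degree-one alternative, so in fact $f:U\to V$ is univalent (a critical preimage would give the $z^d$ case with $d\geq 2$, and an asymptotic preimage the exponential case — both excluded by the existence of the regular preimage $p$ inside $U$). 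Hence there is a single-valued univalent branch $g=f_U^{-1}:V\to U$ with $g(v)=p$.

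\textbf{Step 2: identify $\det$ with $\deh$.} Because $\Deh$ is forward invariant and full, $f(\Deh)\subseteq\Deh$; applying $g$ and using that $g$ maps $\Deh\cap\Delta$-data correctly (the regular preimage $p$ lies in the same component, and $f$ is injective on $\Delta$ so $g(\Delta)=\Delta$), one gets $g(\Deh)=\Deh$, i.e. $\Det$, the component of $f^{-1}(\Deh)$ containing $\Deh$, equals $\Deh$ itself; in particular $\det=\deh$. Here one uses Proposition~\ref{Properties of hats}(3) to control how $g$ interacts with the hat operation, exactly as in the polynomial argument of \cite{CR}.

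\textbf{Step 3: linearize and contradict.} Uniformize $\C\setminus\Deh$ by $\C\setminus\ov\D$; since $f$ is univalent near $\deh$ and $\Deh$ is locally backward invariant (Step 2), the conjugated map extends by Schwarz reflection to an analytic circle diffeomorphism of $\S^1$ of degree one, whose rotation number equals $\theta$. If $\theta\in\HH$, Herman's theorem \cite{He1} linearizes it analytically, and pulling back through the Riemann map gives an analytic linearization of $f$ on an annular neighborhood of $\dde$ — contradicting the maximality of the Siegel disk. Therefore $\theta\notin\HH$.

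\textbf{Main obstacle.} The delicate point is Step 1–2: verifying that the regular preimage really lives in the \emph{distinguished} component $\Det$ (not merely somewhere in $f^{-1}(\Deh)$) and that this both rules out the $z^d$/exponential alternatives of Lemma~\ref{Coverings}(b) and forces $\Det=\Deh$ rather than a larger set. One must be careful that $V$ can be taken small enough that $U$ contains no other singular value and that $f_U^{-1}$ genuinely sends $\Delta$ onto $\Delta$ (using injectivity of $f|_\Delta$ together with the identity principle on a sequence in $\Delta$ accumulating at $p\in\dde$, as in the Claim inside the proof of Lemma~\ref{Local Mane}). Once that bookkeeping is done, Step 3 is verbatim Herman's argument.
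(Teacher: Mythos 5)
Your proposal is correct and follows essentially the same route as the paper: choose a simply connected neighborhood $V$ of $\Deh$ meeting $S(f)$ only at $v$, apply Lemma~\ref{Coverings} to the covering $f:U\setminus f^{-1}(v)\to V\setminus\{v\}$ to conclude that the existence of a regular preimage rules out both the exponential and the $z^d$ ($d\geq 2$) alternatives so $f:U\to V$ is a homeomorphism, deduce $\Det=\Deh$, and then repeat the Ghys/Herman circle-map argument from Proposition A. The only cosmetic differences are that the appeal to Corollary~\ref{separatingbranches} in Step~1 is redundant (Lemma~\ref{Coverings} applied to the punctured disk already does the work), and that the worry at the end about the regular preimage lying in the distinguished component $\Det$ is moot since the hypothesis of the lemma places it there explicitly.
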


\begin{proof}
To show the first claim observe that since $\Deh$ is full and  $v$  is an isolated singular value, there is  a simply connected neighborhood $V$ of $\Deh$ such that  $\ov{V}\cap S(f)=\{v\}$; let $U$ be the component of $f^{-1}(V)$ containing $\Det$. The map $f:U\setminus f^{-1}(v) \ra V\setminus\{v\}$ is a covering, hence by Lemma~\ref{Coverings}, $f^{-1}(v)$ is either empty or a single point. By hypothesis it cannot be empty and  it is a regular preimage, thus $f:U\ra V$ is a homeomorphism and $\Det=\deh$.
The proof of Proposition A can then be repeated to show that $\theta\notin\HH$.\end{proof}

The proof of Proposition C uses a theorem of Ma\~n\'e about hyperbolicity of circle maps (\cite[Theorem A]{Ma1},\cite{Ma2}).

\begin{thm}[Hyperbolicity of circle maps]\label{Hyperbolicity of circle maps}
Let $g:\S^1\ra\S^1$ be a  $C^2$ map. Let $\Lambda\subset\S^1$ be a  forward invariant compact set which does not contain critical points or non-repelling periodic points; then either $\Lambda=\S^1$ and $g$ is topologically equivalent to a rotation, or $\Lambda$ is a hyperbolic set, that is there exist $\ov{k}, \eta>1$ such that for all $k>\ov{k}$ and all $z\in\Lambda$, $|(f^k)'(z)|>\eta$. 
\end{thm}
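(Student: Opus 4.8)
The plan is to reduce the statement to a uniform exponential expansion estimate on $\Lambda$ and then prove that estimate by contradiction, along the lines of Ma\~n\'e. First note that the conclusion --- existence of $\ov k$ and $\eta>1$ with $|(g^k)'(z)|>\eta$ for all $k>\ov k$ and all $z\in\Lambda$ --- is equivalent, via the chain rule and the forward invariance $g(\Lambda)\subseteq\Lambda$, to the existence of $C>0$ and $\lambda>1$ with $|(g^n)'(z)|\geq C\lambda^n$ for all $n\geq 0$ and $z\in\Lambda$; so it suffices to produce such $C,\lambda$ whenever the first alternative fails. Dispose of the boundary case $\Lambda=\S^1$ first: then $g$ has no critical points at all, hence is a $C^2$ covering map of the circle of some degree $d$. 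If $|d|=1$ then $g$ is a $C^2$ diffeomorphism; an orientation--reversing one has fixed points which by an index/intermediate value argument cannot all be repelling, while an orientation--preserving one with a repelling periodic orbit must also carry a non--repelling periodic point, so the hypothesis forces $g$ to be an orientation--preserving $C^2$ diffeomorphism without periodic points, which by the Denjoy theorem is topologically conjugate to the rigid rotation of rotation number $\rho(g)$ --- this is the first alternative. In the remaining cases ($\Lambda\subsetneq\S^1$, or $\Lambda=\S^1$ with $|d|\geq 2$) I aim to prove exponential expansion.

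\textbf{Contradiction hypothesis.} Suppose $\Lambda$ is not hyperbolic. Since $\Lambda$ contains no critical points, $z\mapsto\log|(g^n)'(z)|$ is continuous on the compact set $\Lambda$, and $a_n:=\max_{z\in\Lambda}\bigl(-\log|(g^n)'(z)|\bigr)$ is subadditive ($a_{m+n}\leq a_m+a_n$, using the chain rule and forward invariance), so $a_n/n\to L:=\inf_n a_n/n$. If $L<0$ then $|(g^n)'(z)|\geq e^{-Ln/2}$ for all large $n$ and all $z\in\Lambda$, contradicting non--hyperbolicity; hence $L\geq 0$, so for every $n$ there is $z_n\in\Lambda$ with $|(g^n)'(z_n)|\leq 1$. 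These near--neutral, arbitrarily long orbit segments are the object of study.

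\textbf{Bounded distortion along near--neutral orbits (the core).} Fix $\rho>0$ so that the $\rho$--neighbourhood $V_0$ of $\Lambda$ is disjoint from the finite set $\Cr(g)$; the segment $z_n,g(z_n),\dots,g^n(z_n)$ stays in $\Lambda\subset V_0$. The technical heart is Ma\~n\'e's distortion lemma: because $g$ is $C^2$, the orbit segment avoids $\Cr(g)$ by a definite margin and is non--expanding, there are universal constants $\delta_0>0$, $K\geq 1$ and an interval $J_n\ni z_n$ such that $g^n|_{J_n}$ is a diffeomorphism onto an interval of length $\geq\delta_0$, with $z_n$ well inside $J_n$ and distortion $\sup_{J_n}|(g^n)'|/\inf_{J_n}|(g^n)'|\leq K$. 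Since $|(g^n)'(z_n)|\leq1$, this gives $\sup_{J_n}|(g^n)'|\leq K$, so $g^n|_{J_n}$ is $K$--Lipschitz and $|J_n|\geq|g^n(J_n)|/K\geq\delta_0/K$: the intervals of monotonicity of $g^n$ around $z_n$ do not shrink. The proof of the lemma uses the cross--ratio/Koebe machinery available for $C^2$ maps to control the nonlinearity of $g^n$ once one knows the orbit avoids critical points and does not expand; this is where $C^2$, rather than merely $C^1$, is essential.

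\textbf{Closing the argument.} Passing to a subsequence, $z_n\to z_*\in\Lambda$ and $J_n$ converges in the Hausdorff metric to a nondegenerate interval; consequently there is a fixed nondegenerate interval $I$ with $g^i|_I$ a diffeomorphism for every $i\geq 0$ (equivalently, the forward orbit of $I$ avoids $\Cr(g)$ entirely). If the intervals $g^i(I)$ are eventually pairwise disjoint then $I$ is a wandering interval, impossible for a $C^2$ circle map (no--wandering--intervals theorem; Denjoy in the diffeomorphism case). Otherwise the forward orbit of $I$ returns to itself, and the bounded--distortion information from the previous step drives a closing--lemma argument producing a periodic orbit of $g$ that, being shadowed by the non--expanding blocks $g^{n_j}|_{J_{n_j}}$, must be non--repelling --- contradicting the hypothesis on $\Lambda$. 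In either case we reach a contradiction, so $\Lambda$ is hyperbolic.

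\textbf{Main obstacle.} The crux is the distortion lemma of the third step: producing, in the pure $C^2$ category (no negative Schwarzian available), a pull--back interval with \emph{universally} bounded distortion and image of definite size along an arbitrary non--expanding orbit segment that avoids the critical set. This needs the $C^2$ cross--ratio estimates together with a careful accounting of how the maximal intervals of monotonicity of $g^n$ are cut by preimages of the finitely many critical points. The secondary difficulty is the closing step --- converting non--shrinking pull--backs into an honest non--repelling periodic orbit while excluding wandering intervals.
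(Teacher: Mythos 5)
The paper does not prove this statement: it is quoted verbatim from Ma\~n\'e, and the text immediately preceding it says so, citing \cite[Theorem A]{Ma1} together with the erratum \cite{Ma2}. So there is no ``paper's own proof'' to compare against, and you were effectively being asked to reconstruct Ma\~n\'e's theorem from scratch. Your outline --- reduce to a uniform expansion estimate, peel off the $\Lambda=\S^1$, degree $\pm 1$ case via Denjoy, argue by contradiction via subadditivity of $-\log|(g^n)'|$ to find near-neutral orbit segments, then invoke a bounded-distortion/Koebe argument plus a no-wandering-interval/closing-lemma dichotomy --- is a faithful sketch of the strategy used by Ma\~n\'e and presented in detail in de Melo--van Strien (Theorem III.5.1), so the skeleton is right.

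That said, as a proof the sketch has real holes, and you are honest about the main one. The lemma you isolate as the ``technical heart'' --- an interval $J_n\ni z_n$ on which $g^n$ is a diffeomorphism with universally bounded distortion \emph{and image of definite length $\geq\delta_0$} --- is not established, and as stated it is not clearly true: it is precisely the non-shrinking of the image that Ma\~n\'e's argument has to extract, not assume, and the maximal intervals of monotonicity of $g^n$ around $z_n$ can a priori be cut arbitrarily short by preimages of critical points even though the orbit of $z_n$ itself stays a fixed distance away from them. Two further points need care. First, the reduction to the case that $L\geq 0$ only produces, for each $n$, one point $z_n$ with $|(g^n)'(z_n)|\leq 1$; the conclusion you want is uniform expansion over all of $\Lambda$, and passing from ``no exponential lower bound on $\min_\Lambda|(g^n)'|$'' to the final dichotomy requires an argument (in dMvS this is done by first proving pointwise unboundedness of $|(g^n)'|$ and then upgrading to uniform hyperbolicity). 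Second, the invocation of ``no wandering intervals for $C^2$ circle maps'' is stronger than what is available in general: the $C^2$ no-wandering-interval theorem requires non-flat critical points, so here you would need to exploit the fact that the orbit of $I$ avoids $\Cr(g)$ entirely and run a Denjoy-type distortion argument directly, rather than cite the general theorem. None of this is a wrong turn --- it is the known proof --- but the sketch stops exactly where the work begins.
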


\begin{proof}[Proof of Proposition C]

Let $V\supset\Deh$ be a simply connected open set such that $V\cap S(f)=\{v\}$.  Let $U$ be the component of the preimage of $V$ which contains $\Deh$. As there is only one singular value $v\in V$ which has no regular preimage by Lemma~\ref{Regular preimages}, by Lemma~\ref{Coverings} either $f:U\ra V$ is equivalent to $z^d$ and $v$ has only one preimage in $U$ or $f:U\ra V$ is equivalent to the exponential map,  $v$ is an asymptotic value, and $\det$ is unbounded.

Recall that $\deh$ decomposes like
$$\deh=\dde\cup\De\cup\bigcup_i H_i,$$
and that $f:\dde\ra\dde$ as well as   $f:\De\ra\De$ are surjective.  Hence $v$ cannot belong to $\De$, or it would need to have a regular preimage in $\De$, which by  Lemma~\ref{Regular preimages} would imply that $\theta\notin\HH$.
If $v\in\dde$ by surjectivity of $f|_{\dde}$ it has a finite preimage  on $\dde$, hence $f:U\ra V$ cannot be equivalent to the exponential map.  Again by  Lemma~\ref{Regular preimages}, the preimage of $v$ in $\dde$   cannot be regular; 
so $v$ is a critical value and has a critical preimage on $\partial \Delta$ and there is nothing else to prove. 

So the only case left is the one in which   $v\in H^v$ for some hidden component $H^v$. In this case,  $v$ could be either critical or asymptotic but, by the discussion above $v$ has a unique preimage in $\Det$ and this preimage is not regular.  The rest of the proof is devoted to show that the condition  $v\in H^v$ leads to  a contradiction. 
 
 \vspace{0.2cm}
 
\noindent {\em Case 1: $f:U\ra V$ is equivalent to the exponential map}.  This is to say that $v$ is an asymptotic value. Necessarily $\det$ and hence $U$ are unbounded. Call $\De_i$ the components of  $f^{-1}(\De)$ which are contained in $U$. The $\De_i$'s are all preimages of the same bounded set under a holomorphic map, so they do not accumulate on any compact set. Thus showing that $\De_i \subset \Deh$ for infinitely many $i\in\Z$ would contradict  boundedness of $\deh$.
 
Since the map $f:U\ra V\setminus \{v\}$ is equivalent to the exponential map, and the map $z\ra e^z$ has a transitive infinite cyclic group of automorphisms which is isomorphic to $\Z$ (generated by translations by $2\pi i $), it follows that $f$ also has a transitive infinite cyclic group of automorphisms $G$ generated by some element $\rho \in G$. Up to labelling,  $\De_i=\rho^i \De$ for each $i\in\Z$. 
By Corollary~\ref{Preperiodic hidden Components}, $H^v$ is preperiodic to $\De$. Also $\deh$ is forward invariant, so $f^n(v)\in f^n(H^v) $ is mapped from one hidden component to another until it reaches $\De$. In particular, at the iterate just before entering $\De$,  the asymptotic value $v$ belongs to a direct preimage $\De_m$ of $\De$, which is also a hidden component of $\Deh$.  
By Proposition~\ref{Properties of hats} part $(1)$, $\De_m=\rho^m(\Delta)\subset \Deh\Rightarrow \hat{\rho^m(\Delta)}\subset \Deh$, and by Proposition~\ref{Properties of hats} part $(2)$ since $\rho^m$ is a homeomorphism, $\rho^m(\Deh)=\hat{\rho^m(\Delta)}$ so $\rho^m(\Deh)\subset\Deh$.

Now, for any $q\in\N$

$$ \rho^{mq}\Deh=  \rho^{m(q-1)}\rho^m\Deh \subset  \rho^{m(q-1)}\Deh\subset\ldots\subset\Deh  $$

But then, for any $q \in \N$, we have that $\rho^{mq}\De\subset \rho^{mq}\Deh\subset\Deh$. It follows that there are infinitely many $\De_i\subset\Deh$ contradicting boundedness of $\Deh$.

 \vspace{0.2cm}

{\noindent {\em Case 2: $f:U\ra V$ is equivalent to $z^d$.}  

   Then the  preimage of $v$ is a finite critical point $c\in \Deh$. Since $f$ is equivalent to $z^d$ for some $d\geq 2$ so it is proper, and  
  $U$ (and hence $\det$) is bounded. 
 
This case is totally analogous to the polynomial case which was proven in \cite{CR}. The transcendentality of $f$ plays no role in this setup which can be solved with a local approach. Nevertheless we include the main ideas here for completeness.

The outline of the proof is the following: we first show  that $\Det=\Deh$, then uniformize $\C\setminus\Deh$ to find a conjugate map which extends to  an analytic circle map of  degree $d$. We show that the periodic cycles of the induced circle map are all repelling  by using the fact that if there existed a nonrepelling one,  the remaining singular value would have to  both be attracted to the nonrepelling cycle and accumulate on $\dde$ by Fatou's theorem, which is a contradiction.  We can then use Theorem~\ref{Hyperbolicity of circle maps} to deduce that $f$ has a  polynomial-like restriction in $U$ and conclude the proof using  Fatou's theorem.

The first goal is to show that $\Det=\Deh$. 
Since  $f$ is equivalent to $z^d$ it  has a cyclic symmetry group $G$ isomorphic to $\Z/d\Z$  generated by some element $\rho$.  
Like in Case 1, let $\Delta_i=\rho^i(\De)$ denote the preimages of $\De$ in $U$. By Proposition~\ref{Properties of hats} part $(3)$, $\Det=\hat{\bigcup \De_i}$. 
Using Corollary~\ref{Preperiodic hidden Components}, like in the case where $v$ is asymptotic,  we can find a direct preimage $\De_m$ of $\De$ which is contained in $\deh$. If  $c$ (the preimage of $v$ in $U$) is a simple critical point, this concludes the proof that $\Det=\Deh,$ because there are exactly two preimages $\De$ and $\De'$ of $\De$ and both are contained in $\Deh$. If $c$ has a higher multiplicity, the symmetry arguments are more involved: we  refer to  \cite[Proposition 26 and Paragraph 4.11.6]{CR} for the proof of the fact that $\Det=\Deh$.

Now, as in the proof of Proposition A,  the complement of $\Deh$ can be uniformized to $\C\setminus\overline{\D}$ via a holomorphic map $\Phi$, in order to get, by Schwarz reflection, an analytic map $g$ defined in a neighborhood of $\S^1$, such that $g|_{\S^1}$ is an analytic circle map  with no critical points (see \cite[Lemma 30]{CR}). As $g$ is conjugate to $f$ in a small neighborhood of the unit disk, using the fact that   $f: U\ra V$ has degree $d$, it can be shown that also $g$ has degree exactly $d$ on $\S^1$ \cite[Lemma 30]{CR}. 
Now suppose that $g$ has a  point $x$ of period $q$ in $\S^1$, with $|(g^q)'(x)|\leq 1$. Since $g$ preserves the unit circle, either $|g'(x)|<1$ or $g'(x)= e^{2\pi i \alpha}$ with $\alpha\in \Q$. In both cases there is an open set of points that tend to $x$ under iteration of $g$, whose pullback under $\Phi$ is an open set $B$ of points in $\C\setminus\deh$ tending to $\dde$ under iteration of $f$.
It follows that $B$ is a subset of a periodic Fatou component for $f$, which is either attracting or parabolic. Then the orbit of that periodic Fatou component  has to contain a singular value, which has to be $v'$ because  $f^n(v)\in\De$ for $n$ large enough. But then, since $f$ has exactly two singular values, it follows that all singular values are in the Fatou set, contradicting Fatou's Theorem because of the absence of wandering domains.

So there are no non-repelling periodic points on $\S^1$ and $g$ is hyperbolic on $\S^1$. Hence there is a neighborhood $U_\epsilon$ of $\S^1$ such that $g(U_\epsilon)\supset U_\epsilon$. Using $\Phi^{-1}$, the set $U_\epsilon\cap(\C\setminus\overline{\D})$ can be pulled back to the dynamical plane of $f$ to get a neighborhood $U_f$ of $\deh$ which is mapped outside itself under $f$, making the restriction of $f$ to $U_f$ a polynomial-like map with a unique critical point $c$. Then by Fatou's theorem, $\dde\subset\omega(c)$ and hence $c\in \JJ(f)$ (again by the absence of wandering domains), contradicting that $c\in H^v$.
}

\end{proof}

\end{document}